\newtheorem{theorem}{Theorem}[section]
\newtheorem{corollary}{Corollary}
\newtheorem{lemma}[theorem]{Lemma}
\newtheorem*{problem}{Problem $\mathbf{(P)}$}
\theoremstyle{definition}
\newtheorem{definition}[theorem]{Definition}
\newtheorem{remark}{Remark}
\title[Variational problems of Herglotz type]{Variational problems of Herglotz type
with time delay: DuBois--Reymond condition and Noether's first theorem}
\author[S. P. S. Santos, N. Martins and D. F. M. Torres]{}
\subjclass{Primary: 49K15, 49S05; Secondary: 34H05.}
\keywords{Euler--Lagrange differential equations, Herglotz's calculus of variations, time delay,
invariance, DuBois--Reymond necessary optimality condition, Noether's theorem.}
\email{spsantos@ua.pt}
\email{natalia@ua.pt}
\email{delfim@ua.pt}
\thanks{Part of first author's Ph.D. project, which is carried out under the
Doctoral Programme in Mathematics (PDMat-UA) of University of Aveiro.}
\begin{document}

\maketitle


\centerline{\scshape Sim\~{a}o P. S. Santos, Nat\'{a}lia Martins and Delfim F. M. Torres}
\medskip
{\footnotesize
\centerline{Center for Research and Development in Mathematics and Applications (CIDMA)}
\centerline{Department of Mathematics, University of Aveiro, 3810--193 Aveiro, Portugal}
}


\begin{abstract}
We extend the DuBois--Reymond necessary optimality condition
and Noether's first theorem to variational problems of Herglotz
type with time delay. Our results provide, as corollaries,
the DuBois--Reymond necessary optimality condition and the first
Noether theorem for variational problems with time delay recently
proved in [Numer. Algebra Control Optim. 2 (2012), no.~3, 619--630].
Our main result is also a generalization of the first Noether-type theorem
for the generalized variational principle of Herglotz proved in
[Topol. Methods Nonlinear Anal. 20 (2002), no.~2, 261--273].
\end{abstract}


\section{Introduction}

It is well-known that the classical variational principle is a powerful tool
in various disciplines such as physics, engineering and mathematics.
However, the classical variational principle cannot describe many important physical processes.
In 1930 Herglotz \cite{Herglotz1930} proposed a generalized variational principle with one independent variable,
which generalizes the classical variational principle.
As reported in \cite{Georgieva2010,Georgieva2002,Georgieva2005},
the principle of Herglotz gives a variational description of nonconservative processes,
even when the Lagrangian is autonomous, something that cannot be done with the classical
approach. For the importance to include nonconservativism
in the calculus of variations, we refer the reader to \cite{book:FCV}.

The generalized variational calculus proposed by Herglotz deals with the following problem:
determine the trajectories $x\in C^{2}([a,b], \mathbb{R})$ satisfying given boundary
conditions $x(a)=\alpha$, $x(b)=\beta$, for fixed real numbers $\alpha$, $\beta$,
that extremize (minimize or maximize) the value
$$
z(b) \longrightarrow \mathrm{extr},
$$
where $z$ satisfies the differential equation
\begin{equation}
\label{eq_Herg}
\dot{z}(t)=L\left(t,x(t),\dot{x}(t),z(t)\right), \quad t \in [a,b],
\end{equation}
subject to the initial condition
\begin{equation}
\label{eq_initialvalue}
z(a)=\gamma,
\end{equation}
where $\gamma$ is a fixed real number.
The Lagrangian $L$ is assumed to satisfy the following hypotheses:
\begin{enumerate}
\item $L$ is a $C^1([a,b]\times\mathbb{R}^{3}, \mathbb{R})$ function;
\item functions  $\displaystyle t \mapsto \frac{\partial L}{\partial x}\left(t,
x(t), \dot{x}(t), z(t)\right)$, $\displaystyle t \mapsto \frac{\partial L}{\partial \dot{x}}\left(t,
x(t), \dot{x}(t), z(t)\right)$  and $\displaystyle t \mapsto
\frac{\partial L}{\partial z}\left(t, x(t), \dot{x}(t), z(t)\right)$,
 are differentiable  for any admissible trajectory $x$.
\end{enumerate}
Note that \eqref{eq_Herg} represents a family of differential equations: for each function $x$
a different differential equation arises. Therefore, $z$ depends on $x$, a fact that
can be made explicit by writing $z\{x;t\}$ (or $z(t, x(t), \dot{x}(t))$),
but for brevity and convenience of notation it is usual to write simply $z(t)$.
Observe that Herglotz's variational problem reduces to the classical fundamental problem
of the calculus of variations (see, e.g., \cite{GelfandFomin}) if the Lagrangian $L$
does not depend on the variable $z$: if
\begin{gather*}
\dot{z}(t)=L(t,x(t),\dot{x}(t)), \quad t \in [a,b],\\
z(a)=\gamma, \quad \gamma \in \mathbb{R},
\end{gather*}
then we obtain the classical variational problem
\begin{equation*}
z(b)=\int_a^b \tilde{L}(t,x(t),\dot{x}(t))dt \longrightarrow \mathrm{extr},
\end{equation*}
where
$$
\tilde{L}(t,x,\dot{x})=L(t,x,\dot{x})+\frac{\gamma}{b-a}.
$$
Herglotz proved that a necessary condition for a trajectory $x$
to be an extremizer of the generalized variational problem $z(b) \rightarrow \mathrm{extr}$
subject to \eqref{eq_Herg}--\eqref{eq_initialvalue} is given by
\begin{multline}
\label{eq:gen:EL:eq}
\frac{\partial L}{\partial x}\left(t,x(t),\dot{x}(t),z(t)\right)
-\frac{d}{dt}\frac{\partial L}{\partial \dot{x}}\left(t,x(t),\dot{x}(t),z(t)\right)\\
+\frac{\partial L}{\partial z}\left(t,x(t),\dot{x}(t),z(t)\right)
\frac{\partial L}{\partial \dot{x}}\left(t,x(t),\dot{x}(t),z(t)\right) = 0,
\end{multline}
$t \in [a,b]$. Herglotz called \eqref{eq:gen:EL:eq} \emph{the generalized
Euler--Lagrange equation} \cite{MR1391230,Guenther1996,MR1738100}.
Observe that for the classical problem of the calculus of variations
one has $\frac{\partial L}{\partial z}=0$, and the differential equation \eqref{eq:gen:EL:eq}
reduces to the classical Euler--Lagrange equation:
$$
\frac{\partial L}{\partial x}\left(t,x(t),\dot{x}(t)\right)
-\frac{d}{dt}\frac{\partial L}{\partial \dot{x}}\left(t,x(t),\dot{x}(t)\right) =0.
$$
A generalized Euler--Lagrange differential equation for Herglotz-type
higher-order variational problems was recently proved in \cite{Simao+NM+Torres2013}.

It is well-known that the notions of symmetry and conservation law play an important role in physics,
engineering and mathematics \cite{MR1901565}. The interrelation between symmetry and conservation laws
in the context of the calculus of variations is given by the first Noether theorem \cite{Noether1918}.
The first Noether theorem, usually known as Noether's theorem, guarantees that the
invariance of a variational integral under a group of transformations depending smoothly
on a parameter $\epsilon$ implies the existence of a conserved quantity along the
Euler--Lagrange extremals. Such transformations are global transformations.
Noether's theorem explains all conservation laws of mechanics, e.g.,
conservation of energy comes from invariance of the system under time translations;
conservation of linear momentum comes from invariance of the system under spatial translations;
while conservation of angular momentum reflects invariance with respect to spatial rotations.
The first Noether theorem is nowadays a well-known tool in  modern theoretical physics,
engineering and the calculus of variations \cite{MR2099056}.
Inexplicably, it is still not well-known that the famous paper of Emmy Noether \cite{Noether1918}
includes another important result: the second Noether theorem \cite{Torres-2003}.
Noether's second theorem states that if a variational integral has an infinite-dimensional Lie algebra of
infinitesimal symmetries parameterized linearly by $r$ arbitrary functions and their derivatives up to a given order $m$,
then there are $r$ identities between Euler--Lagrange expressions and their derivatives up to order $m$.
Such transformations are local transformations because can affect every part of the system differently.

In the last decades, Noether's theorems have been formulated in various contexts: see
\cite{Zbig+Nat+Torres2011,Zbig+Torres 2008,Cresson 2009,Gastao+Torres 2007 nonconservative,%
Gastao+Torres 2007,Gastao+Torres 2010 Riesz-Caputo,FredericoTorres2012,MR2671822,Malinowska2Nother,%
MalinowskaNM2013,MartinsTorres2010,Torres-2002,Torres-2003,MR2098297} and references therein.
Since the variational problem proposed by Herglotz defines a functional, whose extrema are sought,
by a family of differential equations, we cannot apply directly the important classical Noether's theorems
to this kind of variational problems. During more than 70 years, the generalization of the
two Noether theorems to variational problems of Herglotz type remained an open question.
The problem was solved in the beginning of XXI century by Georgieva and Guenther
\cite{Georgieva2001,Georgieva2010,Georgieva2002,Georgieva2005,Georgieva2003}.

The main goal of the present work is to generalize  Noether's first theorem
to variational problems of Herglotz type with time delay.
Variational problems with time delay arguments were first introduced
in 1964 by El'sgol'c \cite{El'sgol'c 1964},
who derived an Euler--Lagrange type condition for variational problems with time delay.
Since then, several authors have worked on various aspects of variational problems with time delay arguments
(see \cite{Agrawal et all 1997,MR3124697,Hughes-1968,Palm-1974,Sabbagh-1969} and references therein).
It has been proved that variational systems with time delay play an important role
in the modeling of phenomena in various applied fields.
However, only recently Frederico and Torres generalized the important Noether's
first theorem to optimal control problems with time delay \cite{FredericoTorres2012}.
Here we generalize Herglotz's problem by considering the following variational
problem with time delay. Throughout the text, $\tau$ denotes a real number such that $0\leq \tau<b-a$;
the partial derivative of $L$ with respect to its $i$th argument is denoted by $\partial_i L$.
To simplify notation, we write
$$
z\{x;t\}_\tau:=z(t,x(t),\dot{x}(t),x(t-\tau), \dot{x}(t-\tau)).
$$
\begin{problem}
Determine the trajectories $x\in C^{2}([a-\tau,b], \mathbb{R})$ satisfying given conditions
$$
x(b)=\beta \quad \mbox{and} \quad x(t)=\delta(t), \quad t\in [a-\tau, a],
$$
where $\beta$ is a  fixed real number and $\delta \in C^2([a-\tau,a], \mathbb{R})$ is a given initial function,
that extremize (minimize or maximize) the value of the functional $z\{x;b\}_\tau$,
$$
z(b) \longrightarrow \mathrm{extr},
$$
where $z$ satisfies the differential equation
\begin{equation}
\label{eq_Herg_delay}
\dot{z}(t)=L\left(t,x(t),\dot{x}(t),x(t-\tau),\dot{x}(t-\tau),z(t)\right),
\quad t \in [a,b],
\end{equation}
subject to the initial condition
\begin{equation}
\label{eq_initialvalue_Herg with delay}
z(a)=\gamma,
\end{equation}
where $\gamma$ is a fixed real number.
The Lagrangian $L$ is assumed to satisfy the following hypotheses:
\begin{description}
\item[$(H1)$] $L$ is a $C^1([a,b]\times\mathbb{R}^{5}, \mathbb{R})$ function;
\item[$(H2)$] functions $\displaystyle t \mapsto \partial_i
L\left(t,x(t),\dot{x}(t),x(t-\tau),\dot{x}(t-\tau),z(t)\right)$
for $i=2,\ldots, 6$ are differentiable for any admissible trajectory $x$.
\end{description}
\end{problem}
Observe that problem $z(b) \rightarrow \mathrm{extr}$
subject to \eqref{eq_Herg_delay}--\eqref{eq_initialvalue_Herg with delay}
reduces to the classical fundamental problem of the calculus of variations
with time delay if the Lagrangian $L$ does not depend on $z$.
Also note that problem $(P)$ reduces to the generalized
variational problem of Herglotz when $\tau=0$.

The structure of the paper is as follows. In Section~\ref{Preliminary results} we
review some preliminaries about the generalized variational calculus (without time delay).
In particular, we recall the notion of invariance and the first Noether theorem
for variational problems of Herglotz type. Our main results are given in Section~\ref{Main results}:
a generalized Euler--Lagrange necessary optimality condition (Theorem~\ref{Thm E-L eq time delay}),
a DuBois--Reymond necessary optimality condition (Theorem~\ref{Th. DuBois-R time delay}),
and Noether's first theorem for variational problems of Herglotz type with time delay
(Theorem~\ref{Thm Noether with delay}), are proved. We end with an illustrative example
of our results in Section~\ref{sec:ex}.
The results of the paper are trivially generalized for the case of vector functions
$x:[a-\tau, b]\rightarrow\mathbb{R}^n$, $n \in \mathbb{N}$,
but for simplicity of presentation we restrict ourselves to the scalar case.


\section{Review of Noether's first theorem for variational problems of Herglotz type}
\label{Preliminary results}

For the convenience of the reader, we present here the definition of generalized extremal,
the definition of invariance of functional $z$,
defined by $\dot{z}=L(t,x,\dot{x},z)$ and $z(a)=\gamma$, and we recall
Noether's first theorem for the generalized variational problem of Herglotz type.
For simplicity of notation, we introduce the operator
$\langle \cdot, \cdot \rangle$ defined by
$$
\langle x, z \rangle (t):= \left(t,x(t),\dot{x}(t),z(t)\right).
$$

\begin{theorem}[Generalized Euler--Lagrange equation \cite{Herglotz1930}]
\label{E-L thm Herg}
If $x \in C^{2}([a,b], \mathbb{R})$ is a solution to problem $z(b) \rightarrow \mathrm{extr}$
subject to \eqref{eq_Herg}--\eqref{eq_initialvalue} and the boundary conditions
$x(a)=\alpha$ and $x(b)=\beta$, for some fixed real numbers $\alpha, \beta$,
then $x$ satisfies the generalized Euler--Lagrange equation
\begin{equation}
\label{E-L eq}
\partial_2 L\langle x,z\rangle(t)
-\frac{d}{dt}\partial_3 L\langle x,z\rangle(t)
+\partial_4 L\langle x,z\rangle(t)
\partial_3 L\langle x,z\rangle(t)=0, \quad t\in [a,b].
\end{equation}
\end{theorem}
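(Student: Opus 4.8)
The plan is to use the standard first-variation machinery, adapted to the fact that the quantity being extremized, $z(b)$, is defined only implicitly through the differential equation \eqref{eq_Herg}. First I would embed the extremizer $x$ into a one-parameter family of admissible variations $x(t)+\epsilon\eta(t)$, where $\eta\in C^{2}([a,b],\mathbb{R})$ is arbitrary subject to $\eta(a)=\eta(b)=0$, so that the boundary conditions $x(a)=\alpha$, $x(b)=\beta$ are preserved. For each $\epsilon$ the equation \eqref{eq_Herg} produces a solution $z(t,\epsilon)$ with $z(a,\epsilon)=\gamma$, and by smooth dependence of ODE solutions on parameters the function $\theta(t):=\left.\frac{\partial z}{\partial\epsilon}(t,\epsilon)\right|_{\epsilon=0}$ is well defined and $C^{1}$. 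Since $x$ is an extremizer, the first variation of $z(b)$ vanishes, i.e. $\theta(b)=0$; moreover $\theta(a)=0$ because $z(a,\epsilon)=\gamma$ does not depend on $\epsilon$.

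Next I would derive the linear differential equation governing $\theta$. Differentiating \eqref{eq_Herg} with respect to $\epsilon$ and evaluating at $\epsilon=0$ gives
\begin{equation*}
\dot{\theta}(t)=\partial_2 L\langle x,z\rangle(t)\,\eta(t)+\partial_3 L\langle x,z\rangle(t)\,\dot{\eta}(t)+\partial_4 L\langle x,z\rangle(t)\,\theta(t),
\end{equation*}
a first-order linear ODE for $\theta$. The genuinely new feature compared with the classical problem is the self-coupling term $\partial_4 L\,\theta$, coming from the $z$-dependence of $L$; the key device for handling it is the integrating factor
\begin{equation*}
\lambda(t):=\exp\left(-\int_a^t\partial_4 L\langle x,z\rangle(s)\,ds\right),
\end{equation*}
which is strictly positive and satisfies $\dot{\lambda}=-\partial_4 L\langle x,z\rangle\,\lambda$. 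Multiplying the ODE by $\lambda$ turns its left-hand side into $\frac{d}{dt}(\lambda\theta)$.

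Then I would integrate $\frac{d}{dt}(\lambda\theta)=\lambda\bigl(\partial_2 L\,\eta+\partial_3 L\,\dot{\eta}\bigr)$ over $[a,b]$. Using $\theta(a)=\theta(b)=0$, the left-hand side contributes nothing, leaving $0=\int_a^b\lambda\bigl(\partial_2 L\,\eta+\partial_3 L\,\dot{\eta}\bigr)\,dt$. An integration by parts on the $\dot{\eta}$ term, whose boundary contribution vanishes because $\eta(a)=\eta(b)=0$, yields $0=\int_a^b\left[\lambda\,\partial_2 L-\frac{d}{dt}\bigl(\lambda\,\partial_3 L\bigr)\right]\eta\,dt$. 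Since $\eta$ is an arbitrary $C^{2}$ function vanishing at the endpoints, the fundamental lemma of the calculus of variations forces the bracketed expression to vanish identically on $[a,b]$.

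Finally I would expand $\frac{d}{dt}\bigl(\lambda\,\partial_3 L\bigr)=\dot{\lambda}\,\partial_3 L+\lambda\frac{d}{dt}\partial_3 L$ and substitute $\dot{\lambda}=-\partial_4 L\,\lambda$; dividing through by the nowhere-zero factor $\lambda(t)$ collapses the identity to $\partial_2 L-\frac{d}{dt}\partial_3 L+\partial_4 L\,\partial_3 L=0$, which is exactly \eqref{E-L eq}. The main obstacle I anticipate is the rigorous justification that $\theta$ exists and is continuously differentiable — i.e. that one may legitimately differentiate the solution of the parametrized family \eqref{eq_Herg} with respect to $\epsilon$ — which rests on the $C^{1}$ regularity of $L$ together with standard smooth-dependence theorems for ODEs. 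Everything after that is a careful but routine adaptation of the classical argument, with the integrating factor $\lambda$ being the one new ingredient that encodes the nonconservative, $z$-dependent nature of Herglotz's principle.
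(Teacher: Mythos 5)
Your proof is correct and follows essentially the same route as the paper: the paper does not prove Theorem~\ref{E-L thm Herg} directly (it cites Herglotz and recovers it as the $\tau=0$ corollary of Theorem~\ref{Thm E-L eq time delay}), but its proof of that time-delay generalization is exactly your argument---embed the extremizer in a family $x+\epsilon\eta$, differentiate $z$ with respect to $\epsilon$ to obtain a first-order linear ODE for the variation with the self-coupling term coming from the $z$-dependence of $L$, resolve it with the integrating factor $\lambda(t)=e^{-\int_a^t\partial_4 L\langle x,z\rangle(s)\,ds}$, then integrate by parts and invoke the fundamental lemma. No gaps.
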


\begin{definition}[Generalized extremals---cf. \cite{Georgieva2002}]
The solutions $x \in C^{2}([a,b], \mathbb{R})$ of the
generalized Euler--Lagrange equation \eqref{E-L eq}
are called generalized extremals.
\end{definition}

Consider a one-parameter group
of infinitesimal transformations on $\mathbb{R}^{2}$,
\begin{equation}
\label{global rep}
\bar{t}=\phi(t,x,\epsilon), \quad \bar{x}=\psi(t,x,\epsilon),
\end{equation}
in which $\epsilon$ is the parameter and $\phi$ and $\psi$ are invertible $C^1$
functions such that $\phi(t,x,0)=t$ and $\psi(t,x,0)=x$.
The infinitesimal representation of transformations \eqref{global rep} is given by
\begin{equation*}
\begin{array}{c}
\bar{t}=t+\sigma(t,x)\epsilon+o(\epsilon),\\
\bar{x}=x+\xi(t,x)\epsilon+o(\epsilon),
\end{array}
\end{equation*}
where $\sigma$ and $\xi$ denote the first degree coefficients of $\epsilon$. Explicitly,
\begin{equation*}
\sigma(t,x)=\frac{d\phi}{d\epsilon}(t,x,\epsilon)\biggm\vert_{\epsilon=0},
\quad \quad \xi(t,x)=\frac{d\psi}{d\epsilon}(t,x,\epsilon)\biggm\vert_{\epsilon=0}.
\end{equation*}

\begin{definition}[Invariance---cf. Proposition 3.1 of \cite{Georgieva2002}]
\label{def invariance}
The one-parameter group of transformations
\[
\left\{
\begin{array}{c}
\bar{t}=\phi(t,x,\epsilon)\\
\bar{x}=\psi(t,x,\epsilon)
\end{array}
\right.
\]
leave the functional $z$, defined by $\dot{z}=L(t,x,\dot{x},z)$
and $z(a)=\gamma$ for some fixed real number $\gamma$, invariant, if
$$
\frac{d}{d\epsilon}\left[L\left(\bar{t},\bar{x}(\bar{t}),
\frac{d\bar{x}}{d\bar{t}}(\bar{t}),\bar{z}(\bar{t})\right)
\cdot \frac{d\bar{t}}{dt}\right] \biggm\vert_{\epsilon=0}=0.
$$
\end{definition}

We now prove the following useful result.

\begin{lemma}[Necessary condition for invariance]
\label{lemma invariance}
If the functional $z=z\{x;t\}$ defined by $\dot{z}(t)=L(t,x(t),\dot{x}(t), z(t))$
and $z(a)=\gamma$, for some fixed real number $\gamma$, is invariant under
the one-parameter group of transformations \eqref{global rep}, then
$$
\frac{d\bar{z}}{d\epsilon}(t)\biggm\vert_{\epsilon=0}=0
$$
for each $t \in [a,b]$.
\end{lemma}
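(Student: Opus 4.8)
The plan is to exploit the observation that the quantity inside the outer derivative in Definition~\ref{def invariance} is precisely the total $t$-derivative of the transformed functional $\bar{z}$, and then to interchange the $\epsilon$- and $t$-differentiations, so that the invariance identity becomes the statement that $\frac{d\bar{z}}{d\epsilon}\big\vert_{\epsilon=0}$ is constant in $t$; the prescribed initial condition then forces this constant to be zero.

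First I would record the defining relation for the transformed functional. Writing $\bar{z}$ for the functional associated by the Herglotz equation with the transformed curve $\bar{x}$, one has, along the transformed time $\bar{t}$, the relation $\frac{d\bar{z}}{d\bar{t}}=L\left(\bar{t},\bar{x}(\bar{t}),\frac{d\bar{x}}{d\bar{t}}(\bar{t}),\bar{z}(\bar{t})\right)$, so that by the chain rule
$$
\frac{d}{dt}\bar{z}=L\left(\bar{t},\bar{x}(\bar{t}),\frac{d\bar{x}}{d\bar{t}}(\bar{t}),\bar{z}(\bar{t})\right)\cdot\frac{d\bar{t}}{dt}.
$$
Thus the expression differentiated in Definition~\ref{def invariance} is exactly $\frac{d}{dt}\bar{z}$, and the invariance hypothesis reads $\frac{d}{d\epsilon}\left(\frac{d}{dt}\bar{z}\right)\big\vert_{\epsilon=0}=0$ for each $t\in[a,b]$.

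Next I would interchange the two derivatives. Since $\phi$ and $\psi$ are $C^1$ and $\bar{z}$ depends smoothly on both $t$ and $\epsilon$ (being obtained by integrating a $C^1$ Lagrangian along a curve that varies smoothly with $\epsilon$), the mixed derivatives coincide, and the invariance identity becomes $\frac{d}{dt}\left(\frac{d\bar{z}}{d\epsilon}\big\vert_{\epsilon=0}\right)=0$ on $[a,b]$. Consequently the map $t\mapsto\frac{d\bar{z}}{d\epsilon}(t)\big\vert_{\epsilon=0}$ is constant on $[a,b]$.

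Finally I would pin down this constant at the left endpoint. Because the initial value $\bar{z}(a)=\gamma$ is prescribed and does not depend on $\epsilon$, differentiating in $\epsilon$ gives $\frac{d\bar{z}}{d\epsilon}(a)\big\vert_{\epsilon=0}=0$. Combined with the constancy just established, this yields $\frac{d\bar{z}}{d\epsilon}(t)\big\vert_{\epsilon=0}=0$ for every $t\in[a,b]$, which is the assertion. The only genuinely delicate step is the justification of the interchange of $\frac{d}{d\epsilon}$ and $\frac{d}{dt}$; once the smoothness guaranteeing this interchange is invoked, the remaining steps are immediate consequences of the invariance hypothesis and the fixed initial condition.
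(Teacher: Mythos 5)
Your proof is correct and its skeleton is the same as the paper's: recognize the invariance integrand as $\frac{d}{dt}\bar{z}$, interchange $\frac{d}{d\epsilon}$ and $\frac{d}{dt}$ to conclude that $h(t):=\frac{d\bar{z}}{d\epsilon}(t)\big\vert_{\epsilon=0}$ is constant on $[a,b]$, and kill the constant at $t=a$. Where you genuinely differ is the endpoint step. The paper does not simply assert that $\bar{z}(a)$ is independent of $\epsilon$; it takes the prescribed datum to be $\bar{z}(\bar{a})=\bar{\gamma}$, attached at the \emph{transformed} initial time $\bar{a}=\phi(a,x(a),\epsilon)$, which moves with $\epsilon$, and then transfers this information to the point $a$ by a case analysis ($\bar{a}=a$ versus $\bar{a}\neq a$, the latter invoking $\sigma(a,x)\neq 0$). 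Your shortcut is legitimate provided one reads $\bar{z}(a)$ as the composed function $t\mapsto\bar{z}(\bar{t}(t))$ evaluated at $t=a$, i.e.\ as $\bar{z}(\bar{a})$: that is the same composition whose $t$-derivative appears in Definition~\ref{def invariance} and in your interchange step, so it is the consistent reading, and under it $h(a)=\frac{d}{d\epsilon}\bigl(\bar{z}(\bar{a})\bigr)\big\vert_{\epsilon=0}=0$ is immediate, with no case analysis needed. What you should add is one sentence making that identification explicit, because as literally written the claim ``$\bar{z}(a)=\gamma$ does not depend on $\epsilon$'' is false under the other reading: the value of $\bar{z}$ at the fixed instant $a$ differs from the prescribed value at $\bar{a}$ by $L\bigl(a,x(a),\dot{x}(a),\gamma\bigr)\,\sigma\bigl(a,x(a)\bigr)\,\epsilon+o(\epsilon)$, which need not vanish. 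Net effect: your endpoint argument is cleaner than the paper's (it avoids the case distinction and the paper's delicate chain-rule manipulation there), at the price of having to be explicit about which function of $t$ is being differentiated in $\epsilon$.
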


\begin{proof}
Note that
\begin{equation*}
\frac{d\bar{z}}{d\bar{t}}(\bar{t})=L\left(\bar{t},\bar{x}(\bar{t}),
\frac{d\bar{x}}{d\bar{t}}(\bar{t}),\bar{z}(\bar{t})\right)
\end{equation*}
and by multiplying both sides of the equality by
$\displaystyle\frac{d\bar{t}}{dt}$ we have, by the chain rule, that
$$
\frac{d\bar{z}}{dt}(t)=\frac{d\bar{z}}{d\bar{t}}(\bar{t})
\cdot \frac{d\bar{t}}{dt}(t)= L\left(\bar{t},\bar{x}(\bar{t}),
\frac{d\bar{x}}{d\bar{t}}(\bar{t}),\bar{z}(\bar{t})\right)
\cdot \frac{d\bar{t}}{dt}(t).
$$
Now, differentiating with respect to $\epsilon$ and setting $\epsilon=0$,
we find, by definition of invariance, that
\begin{equation*}
\frac{d}{dt}\left(\frac{d\bar{z}}{d\epsilon}\right)\biggm\vert_{\epsilon=0}
= \frac{d}{d\epsilon}\left(\frac{d\bar{z}}{dt}\right)\biggm\vert_{\epsilon=0}
= \frac{d}{d\epsilon}\left[L\left(\bar{t},\bar{x}(\bar{t}),\frac{d\bar{x}}{d\bar{t}}(\bar{t}),
\bar{z}(\bar{t})\right)\cdot \frac{d\bar{t}}{dt}\right] \biggm\vert_{\epsilon=0}=0.
\end{equation*}
Defining $h(t):=\displaystyle\frac{d\bar{z}}{d\epsilon}(t)\big\vert_{\epsilon=0}$,
we get that $\displaystyle\frac{d h}{dt}(t)=0$ for all $t \in [a, b]$.
Since we are supposing  the initial condition $z(a)$ to be fixed ($z(a)=\gamma$), then
$\bar{z}(\bar{a})$ is also fixed ($\bar{z}(\bar{a})=\bar{\gamma}$) and hence $\frac{d}{d\epsilon}(\bar{z}(\bar{a}))\Big\vert_{\epsilon=0}=0.$
Observe that if $\bar{a}=a$, then $\displaystyle\frac{d\bar{z}}{d\epsilon}(a)\Big\vert_{\epsilon=0}=0$; if $\bar{a}\neq a$, then
\begin{equation*}
0=\frac{d}{d\epsilon}(\bar{z}(\bar{a}))\Big\vert_{\epsilon=0}
=\frac{d\bar{z}}{d\epsilon}(\bar{a})\Big\vert_{\epsilon=0}\frac{d\bar{a}}{d\epsilon}\Big\vert_{\epsilon=0}
=\frac{d\bar{z}}{d\epsilon}(a)\Big\vert_{\epsilon=0}\sigma(a,x)
\end{equation*}
and because $\sigma(a,x)\neq 0$, we can write that
$\displaystyle\frac{d\bar{z}}{d\epsilon}(a)\Big\vert_{\epsilon=0}=0$.
By definition of $h$, this means that $h(a)=0$.
Since $h$ is constant on $[a, b]$, we conclude that
$$
h(t):=\frac{d\bar{z}}{d\epsilon}(t)\biggm\vert_{\epsilon=0}=0
$$
for all $t \in [a,b]$.
\end{proof}

\begin{theorem}[Noether's first theorem for variational problems of Herglotz type \cite{Georgieva2002}]
\label{1st Noether THM}
If functional $z=z\{x;t\}$ defined by $\dot{z}=L\left(t,x(t),\dot{x}(t), z(t)\right)$
and $z(a)=\gamma$, for some fixed real number $\gamma$, is invariant under
the one-parameter group of transformations \eqref{global rep}, then
\begin{equation*}
\lambda(t) \cdot
\Bigl(\left[L\langle x,z\rangle(t)-\dot{x}\partial_3 L\langle x,z\rangle(t)\right]\sigma(t,x)
+\partial_3 L\langle x,z\rangle(t)\xi(t,x)\Bigr)
\end{equation*}
is conserved along the generalized extremals, where
$\lambda(t):= e^{-\int^t_a\partial_4 L \langle x, z \rangle(\theta)d\theta}$.
\end{theorem}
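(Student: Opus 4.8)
The plan is to show that the candidate quantity $C(t):=\lambda(t)\,I(t)$, where
$$
I(t):=\bigl[L\langle x,z\rangle(t)-\dot{x}(t)\,\partial_3 L\langle x,z\rangle(t)\bigr]\sigma(t,x)+\partial_3 L\langle x,z\rangle(t)\,\xi(t,x),
$$
has vanishing total derivative along any generalized extremal. First I would extract an infinitesimal necessary condition from invariance. Differentiating the identity of Definition~\ref{def invariance} at $\epsilon=0$ and applying the product and chain rules, I would use $\tfrac{d}{d\epsilon}\bar t|_{\epsilon=0}=\sigma$, $\tfrac{d}{d\epsilon}[\bar x(\bar t)]|_{\epsilon=0}=\xi$, $\tfrac{d}{d\epsilon}\bigl[\tfrac{d\bar t}{dt}\bigr]|_{\epsilon=0}=\dot\sigma$ and the slightly less obvious relation $\tfrac{d}{d\epsilon}\bigl[\tfrac{d\bar x}{d\bar t}(\bar t)\bigr]|_{\epsilon=0}=\dot\xi-\dot x\,\dot\sigma$, obtained by writing $\tfrac{d\bar x}{d\bar t}=\tfrac{d\bar x/dt}{d\bar t/dt}$ and differentiating the quotient. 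Crucially, the chain rule produces a term $\partial_4 L\cdot\tfrac{d}{d\epsilon}[\bar z(\bar t)]|_{\epsilon=0}$, and this vanishes by Lemma~\ref{lemma invariance}. This yields the necessary condition
\begin{equation}
\partial_1 L\,\sigma+\partial_2 L\,\xi+\partial_3 L\,\dot\xi-\dot{x}\,\partial_3 L\,\dot\sigma+L\,\dot\sigma=0,\tag{$*$}
\end{equation}
identical in form to the classical one, with every $\partial_i L$ evaluated at $\langle x,z\rangle(t)$.

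Next I would differentiate $C$. Since $\dot\lambda=-\lambda\,\partial_4 L\langle x,z\rangle$ by construction of $\lambda$, one gets $\dot C=\lambda(\dot I-\partial_4 L\,I)$, and as $\lambda$ never vanishes it suffices to prove $\dot I-\partial_4 L\,I=0$. Expanding $\dot I$, the only place the Herglotz structure enters is through $\dot L=\partial_1 L+\partial_2 L\,\dot x+\partial_3 L\,\ddot x+\partial_4 L\,\dot z$, into which I substitute $\dot z=L$ from \eqref{eq_Herg}. The $\ddot x$ terms cancel exactly as in the classical computation, leaving an expression in $\sigma,\dot\sigma,\xi,\dot\xi$ whose coefficients involve the quantities $\partial_i L$, $\tfrac{d}{dt}\partial_3 L$ and $\partial_4 L$.

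Then I would invoke the generalized Euler--Lagrange equation \eqref{E-L eq} in the form $\tfrac{d}{dt}\partial_3 L=\partial_2 L+\partial_4 L\,\partial_3 L$. After the two terms $\pm\,\partial_4 L\,L\,\sigma$ cancel, the surviving $\partial_4 L$-contribution to the coefficient of $\sigma$ is $+\partial_4 L\,\dot x\,\partial_3 L$, which is annihilated by the $-\partial_4 L\,\partial_3 L$ piece hidden in $-\dot x\,\tfrac{d}{dt}\partial_3 L$, so that coefficient reduces to $\partial_1 L$. Similarly the coefficient of $\xi$, namely $\tfrac{d}{dt}\partial_3 L-\partial_4 L\,\partial_3 L$, reduces to $\partial_2 L$. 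What remains is precisely $\partial_1 L\,\sigma+\partial_2 L\,\xi+\partial_3 L\,\dot\xi+L\,\dot\sigma-\dot x\,\partial_3 L\,\dot\sigma$, the left-hand side of $(*)$, and therefore zero; hence $\dot C=0$.

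The main obstacle is careful bookkeeping at two junctions. First, in deriving $(*)$ one must recognise that the $\partial_4 L$-term is exactly the object killed by Lemma~\ref{lemma invariance}; this is the step that makes the invariance identity in the Herglotz case coincide in form with the classical one. Second, one must match the extra $\partial_4 L\,\partial_3 L$ terms so that they cancel after using the modified Euler--Lagrange equation. The exponential weight $\lambda$ is engineered precisely so that $\dot\lambda=-\lambda\,\partial_4 L$ absorbs the $\partial_4 L\,I$ correction, which explains why no such factor appears in the classical theorem (there $\partial_4 L=0$, so $\lambda\equiv 1$).
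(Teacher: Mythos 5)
Your proof is correct. The infinitesimal invariance identity $(*)$ is exactly what results from differentiating Definition~\ref{def invariance} at $\epsilon=0$ and killing the $\partial_4 L$-term with Lemma~\ref{lemma invariance}, and your computation $\dot C=\lambda\,(\dot I-\partial_4 L\, I)$ (valid since $\dot\lambda=-\lambda\,\partial_4 L$), followed by substitution of the generalized Euler--Lagrange equation \eqref{E-L eq} in the form $\tfrac{d}{dt}\partial_3 L=\partial_2 L+\partial_4 L\,\partial_3 L$, does collapse the remainder to the left-hand side of $(*)$. However, your route is genuinely different from the paper's. The paper never differentiates the candidate conserved quantity: it takes this theorem from \cite{Georgieva2002} and recovers it as the $\tau=0$ corollary of Theorem~\ref{Thm Noether with delay}, whose proof runs in the opposite direction --- an integrated invariance identity weighted by $\lambda$ (Lemma~\ref{invariance condt II}, the delayed analogue of your step~1 kept in integral form), then integration by parts, after which the integral terms are annihilated by the Euler--Lagrange equations \emph{together with} the DuBois--Reymond conditions of Theorem~\ref{Th. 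DuBois-R time delay}, leaving the boundary terms constant. So the paper's argument uses DuBois--Reymond as a separate ingredient, whereas your direct differentiation re-derives the needed total-derivative identities inline and requires only \eqref{E-L eq}; your remark that $\lambda$ is precisely the integrating factor absorbing the $\partial_4 L\,I$ correction is also the cleanest explanation of why the exponential weight appears at all. What the paper's heavier machinery buys is robustness under time delay: for $\tau>0$ the delayed arguments couple the times $t$ and $t+\tau$, the conserved quantity splits into two pieces on $[a,b-\tau]$ and $[b-\tau,b]$, and the integral formulation with a change of variables (plus the extra hypotheses \eqref{extra-extra} and \eqref{Extra Hip Noeth}) is what makes that case tractable; your argument is the shorter, self-contained one for the $\tau=0$ statement actually at hand.
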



\section{Main results}
\label{Main results}

We prove some important results for variational problems of Herglotz type with time delay:
a generalized Euler--Lagrange necessary optimality condition (Theorem~\ref{Thm E-L eq time delay}),
a DuBois--Reymond necessary optimality condition (Theorem~\ref{Th. DuBois-R time delay})
and a Noether's first theorem for variational problems of Herglotz
type with time delay (Theorem~\ref{Thm Noether with delay}).
To simplify the presentation, we suppress most of the arguments
and the following notation is used throughout:
$$
[x, z]_\tau(t) := \left(t,x(t),\dot{x}(t), x(t-\tau),\dot{x}(t-\tau),z(t)\right).
$$

\begin{definition}[Admissible function]
A function $x\in C^2\left([a-\tau,b],\mathbb{R}\right)$ is said
to be admissible for problem $(P)$ if it satisfies the endpoint
condition $x(b)=\beta$ and $x(t)=\delta(t)$ for all $t \in [a-\tau,a]$.
\end{definition}

\begin{definition}[Admissible variation]
We say that $\eta\in C^2\left([a-\tau,b],\mathbb{R}\right)$
is an admissible variation for problem $(P)$
if $\eta(t)=0$ for $t \in [a-\tau,a]$ and $\eta(b)=0$.
\end{definition}

The following result gives a necessary condition of Euler--Lagrange type
for an admissible function $x$ to be an extremizer of the functional $z\{x;b\}_\tau$,
where $z$ is defined by \eqref{eq_Herg_delay}--\eqref{eq_initialvalue_Herg with delay}.

\begin{theorem}[Generalized Euler--Lagrange equations
for variational problems of Herglotz type with time delay]
\label{Thm E-L eq time delay}
If $x \in C^2([a-\tau,b],\mathbb{R})$ is a solution to problem $(P)$, then $x$ satisfies
the following generalized Euler--Lagrange equations with time delay:
\begin{multline}
\label{E-L delay 1}
\lambda(t+\tau)\biggl[\partial_4L[x, z]_\tau(t+\tau)
-\frac{d}{dt}\partial_5L[x, z]_\tau(t+\tau)\\
+\partial_5L[x, z]_\tau(t+\tau)\partial_6L[x, z]_\tau(t+\tau)\biggr]\\
+ \lambda(t)\left[\partial_2L[x, z]_\tau(t)
-\frac{d}{dt}\partial_3L[x, z]_\tau(t)+\partial_3
L[x, z]_\tau(t)\partial_6L[x, z]_\tau(t)\right]=0,
\end{multline}
$a\leq t \leq b-\tau$, where $\lambda(t):=
e^{-\int^t_a\partial_6L[x, z]_\tau(\theta) d\theta}$, and
\begin{equation}
\label{E-L delay 2}
\partial_2L[x, z]_\tau(t)-\frac{d}{dt}\partial_3L[x, z]_\tau(t)
+\partial_3L[x, z]_\tau(t)\partial_6L[x, z]_\tau(t)=0,
\end{equation}
$b-\tau \leq t \leq b$.
\end{theorem}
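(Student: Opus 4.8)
The plan is to follow the classical variation-of-the-functional strategy, adapted to the implicit, Herglotz-type definition of $z$. First I embed a solution $x$ in a one-parameter family $x+\epsilon\eta$, where $\eta$ is an arbitrary admissible variation (so $\eta\equiv 0$ on $[a-\tau,a]$ and $\eta(b)=0$), and I let $\vartheta(t):=\frac{\partial}{\partial\epsilon}z(t)\big|_{\epsilon=0}$ denote the induced variation of $z$. Differentiating the defining equation \eqref{eq_Herg_delay} with respect to $\epsilon$ and setting $\epsilon=0$, the chain rule (acting also on the delayed arguments $x(t-\tau)+\epsilon\eta(t-\tau)$ and $\dot{x}(t-\tau)+\epsilon\dot\eta(t-\tau)$) produces the linear first-order ordinary differential equation
$$
\dot\vartheta(t)=\partial_6 L[x,z]_\tau(t)\,\vartheta(t)+R(t),
$$
with forcing term $R(t)=\partial_2 L\,\eta(t)+\partial_3 L\,\dot\eta(t)+\partial_4 L\,\eta(t-\tau)+\partial_5 L\,\dot\eta(t-\tau)$, all partials being evaluated along $[x,z]_\tau(t)$. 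Since $z(a)=\gamma$ is fixed and $\eta$ vanishes on $[a-\tau,a]$, we have $\vartheta(a)=0$; and because $x$ is an extremizer, the first-order condition forces $\vartheta(b)=0$.

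Next I integrate this equation using the integrating factor $\lambda$ defined in the statement, which satisfies $\dot\lambda=-\partial_6 L[x,z]_\tau\,\lambda$. From $\frac{d}{dt}(\lambda\vartheta)=\lambda R$ together with $\lambda(a)=1$ and $\vartheta(a)=\vartheta(b)=0$, I obtain the central identity $\int_a^b \lambda(t)R(t)\,dt=0$, valid for every admissible variation $\eta$. The delayed contributions are then transported by the substitution $s=t-\tau$; using $\eta\equiv 0$ on $[a-\tau,a]$ to discard the part of the range below $a$, the two delay terms become
$$
\int_a^{b-\tau}\lambda(s+\tau)\bigl[\partial_4 L[x,z]_\tau(s+\tau)\,\eta(s)+\partial_5 L[x,z]_\tau(s+\tau)\,\dot\eta(s)\bigr]\,ds.
$$

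I then split $\int_a^b$ at $t=b-\tau$ and integrate by parts every term carrying $\dot\eta$, in each case replacing $\dot\lambda$ by $-\partial_6 L\,\lambda$; this is precisely what converts a bare $\frac{d}{dt}\partial_3 L$ into the combination $\frac{d}{dt}\partial_3 L-\partial_3 L\,\partial_6 L$, and likewise for $\partial_5 L$, producing the factors appearing in \eqref{E-L delay 1}--\eqref{E-L delay 2}. After collecting terms, the identity reads as an integral over $[a,b-\tau]$ with integrand $\lambda(t)\bigl[\cdots\bigr]+\lambda(t+\tau)\bigl[\cdots\bigr]$, plus an integral over $[b-\tau,b]$ with integrand $\lambda(t)\bigl[\cdots\bigr]$, plus boundary contributions at the junction $t=b-\tau$. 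To conclude, I invoke the fundamental lemma of the calculus of variations twice with suitably localized variations: taking $\eta$ supported in $(a,b-\tau)$ annihilates the $[b-\tau,b]$ integral and the junction terms and yields \eqref{E-L delay 1}, whereas taking $\eta$ supported in $(b-\tau,b)$ yields \eqref{E-L delay 2}.

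I expect the main obstacle to be the careful bookkeeping of the shifted arguments $t\mapsto t+\tau$ and of the boundary terms generated at the junction $t=b-\tau$ during integration by parts: one must verify that the contributions proportional to $\eta(b-\tau)$ arising from the $[a,b-\tau]$ and $[b-\tau,b]$ pieces are correctly tracked, and that the fundamental lemma is applied only to variations annihilating these junction terms, so that each of the two Euler--Lagrange equations is isolated cleanly over its own subinterval.
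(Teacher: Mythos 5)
Your proposal is correct and follows essentially the same route as the paper's own proof: the variation $\vartheta$ of $z$ (the paper's $\zeta$), the linear ODE it satisfies, the integrating factor $\lambda$ with $\vartheta(a)=\vartheta(b)=0$ yielding $\int_a^b\lambda R\,dt=0$, the shift of the delayed terms onto $[a,b-\tau]$, integration by parts, and two applications of the fundamental lemma with variations localized in $(a,b-\tau)$ and $(b-\tau,b)$. The only differences are cosmetic (naming of the shifted variable and your more explicit tracking of the junction terms at $t=b-\tau$, which the paper handles implicitly by choosing variations vanishing there).
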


\begin{proof}
Suppose $x \in C^2([a-\tau,b],\mathbb{R})$ is a solution
to problem $(P)$ and let $\eta$ be an admissible variation.
Let $\epsilon$ be an arbitrary real number and define $\zeta:[a,b]\rightarrow\mathbb{R}$ by
$$
\zeta(t):=\frac{d}{d\epsilon}z\{x+\epsilon\eta;t\}_\tau\biggm\vert_{\epsilon=0}.
$$
Obviously, $\zeta(a)=0$ and, since $x$ is an extremizer, we conclude that $\zeta(b)=0$. Observe that
\begin{equation*}
\dot{\zeta}(t)= \frac{d}{dt}\frac{d}{d\epsilon}z\{x+\epsilon\eta;t\}_\tau\biggm\vert_{\epsilon=0}
=\frac{d}{d\epsilon}\frac{d}{dt} z\{x+\epsilon\eta;t\}_\tau\biggm\vert_{\epsilon=0}
=\frac{d}{d\epsilon}L[x+\epsilon \eta,z]_\tau(t)\biggm\vert_{\epsilon=0},
\end{equation*}
which means that
\begin{multline*}
\dot{\zeta}(t)
= \partial_2L[x,z]_\tau(t)\eta(t)+\partial_3L[x,z]_\tau(t)\dot{\eta}(t)
+ \partial_4L[x,z]_\tau(t)\eta(t-\tau)\\
+\partial_5L[x,z]_\tau(t)\dot{\eta}(t-\tau)+\partial_6L[x,z]_\tau(t)\zeta(t).
\end{multline*}
Consequently, $\zeta$ is solution
of the first order linear differential equation
$$
\dot{\zeta}=\partial_2L\eta(t)+\partial_3L\dot{\eta}(t)+ \partial_4L\eta(t-\tau)
+ \partial_5L\dot{\eta}(t-\tau)+\partial_6L\zeta.
$$
Hence, $\zeta$ satisfies the equation
\begin{multline*}
\lambda(t)\zeta(t)-\zeta(a)= \int_a^t\lambda(s)\biggl[\partial_2L[x,z]_\tau(s)\eta(s)
+\partial_3L[x,z]_\tau(s)\dot{\eta}(s)\\
+\partial_4L[x,z]_\tau(s)\eta(s-\tau)+ \partial_5L[x,z]_\tau(s)\dot{\eta}(s-\tau)\biggr]ds,
\end{multline*}
where
$\lambda(t):=\displaystyle e^{-\int^t_a\partial_6L[x, z]_\tau(\theta) d\theta}$.
The previous equation is valid for all $t \in [a,b]$, in particular for $t=b$.
Because $\zeta(a)=\zeta(b)=0$, we have
\begin{multline*}
\int_a^b\lambda(s)\left[\partial_2L[x,z]_\tau(s)\eta(s)
+\partial_3L[x,z]_\tau(s)\dot{\eta}(s) \right]ds\\
+ \int_a^b\lambda(s)\left[ \partial_4L[x,z]_\tau(s)\eta(s-\tau)
+ \partial_5L[x,z]_\tau(s)\dot{\eta}(s-\tau)\right]ds=0.
\end{multline*}
Applying the change of variable $s=t+\tau$ in the second integral
and recalling that $\eta$ is null in $[a-\tau, a]$, we obtain that
\begin{multline*}
\int_a^b\lambda(s)\left[\partial_2L[x,z]_\tau(s)\eta(s)
+\partial_3L[x,z]_\tau(s)\dot{\eta}(s)\right]ds\\
+ \int_a^{b-\tau} \lambda(s+\tau)\left[\partial_4
L[x,z]_\tau(s+\tau)\eta(s)+\partial_5L[x,z]_\tau(s+\tau)\dot{\eta}(s)\right]ds=0,
\end{multline*}
that is,
\begin{multline*}
\int_a^{b-\tau}\left[\lambda(s)\partial_2L[x,z]_\tau(s)
+ \lambda(s+\tau)\partial_4L[x,z]_\tau(s+\tau)\right]\eta(s) ds\\
+ \int_a^{b-\tau}\left[ \lambda(s)\partial_3L[x,z]_\tau(s)
+ \lambda(s+\tau)\partial_5L[x,z]_\tau(s+\tau) \right]\dot{\eta}(s)ds\\
+ \int_{b-\tau}^b\lambda(s)\left[\partial_2
L[x,z]_\tau(s)\eta(s)+\partial_3L[x,z]_\tau(s)\dot{\eta}(s)\right]ds=0.
\end{multline*}
Integration by parts gives
\begin{align*}
&\int_a^{b-\tau}\biggl\{ \lambda(s)\partial_2L[x,z]_\tau(s)
+ \lambda(s+\tau)\partial_4L[x,z]_\tau(s+\tau)\biggr.\\
&\qquad \biggl.-\frac{d}{ds}\left[ \lambda(s)\partial_3L[x,z]_\tau(s)
+ \lambda(s+\tau)\partial_5L[x,z]_\tau(s+\tau)\right]\biggr\}\eta(s)ds\\
&+\left[\left(\lambda(s)\partial_3L[x,z]_\tau(s)
+ \lambda(s+\tau)\partial_5L[x,z]_\tau(s+\tau)\right)\eta(s)\right]_a^{b-\tau}\\
&+ \int_{b-\tau}^b\left[\lambda(s)\partial_2L[x,z]_\tau(s)
-\frac{d}{ds}(\lambda(s)\partial_3L[x,z]_\tau(s))\right]\eta(s)ds\\
&+ \left[\lambda(s)\partial_3L[x,z]_\tau(s)\eta(s)\right]_{b-\tau}^b=0.
\end{align*}
Since previous equation holds for all admissible variations, it holds also
for those admissible variations $\eta$ such that $\eta(t)=0$
for all $t \in [b-\tau,b]$ and, therefore, we get
\begin{align*}
&\int_a^{b-\tau}\biggl\{ \lambda(s)\partial_2L[x,z]_\tau(s)
+ \lambda(s+\tau)\partial_4L[x,z]_\tau(s+\tau)\biggr.\\
&\qquad \biggl.-\frac{d}{ds}\left[ \lambda(s)\partial_3L[x,z]_\tau(s)
+ \lambda(s+\tau)\partial_5L[x,z]_\tau(s+\tau)\right]\biggr\}\eta(s)ds=0.
\end{align*}
From the fundamental lemma of the calculus of variations
(see, e.g., \cite{GelfandFomin}), we conclude that
\begin{multline*}
\lambda(t+\tau)\partial_4L[x,z]_\tau(t+\tau)+\lambda(t)\partial_2L[x,z]_\tau(t)\\
-\frac{d}{dt}\left[\lambda(t+\tau)\partial_5L[x,z]_\tau(t+\tau)+\lambda(t)\partial_3L[x,z]_\tau(t)\right]=0
\end{multline*}
for $a\leq t \leq b-\tau$, proving equation \eqref{E-L delay 1}.
Now, if we restrict ourselves to those admissible variations
$\eta$ such that $\eta(t)=0$ for all $t \in [a,b-\tau]$ we get
$$
\int_{b-\tau}^b\left[\lambda(s)\partial_2L[x,z]_\tau(s)
-\frac{d}{ds}(\lambda(s)\partial_3L[x,z]_\tau(s))\right]\eta(s)ds=0
$$
and from the fundamental lemma of the calculus of variations we conclude that
$$
\lambda(t)\partial_2L[x,z]_\tau(t)-\frac{d}{dt}(\lambda(t)\partial_3L[x,z]_\tau(t))=0
$$
for $b-\tau \leq t \leq b$, proving equation \eqref{E-L delay 2}.
\end{proof}

\begin{definition}[Generalized extremals with time delay]
\label{def invariance time delay}
The solutions $x\in C^2([a-\tau,b],\mathbb{R})$ of the
Euler--Lagrange equations \eqref{E-L delay 1}--\eqref{E-L delay 2}
are called generalized extremals  with time delay.
\end{definition}

\begin{remark}
Note that if there is no time delay, that is, if $\tau=0$, then problem $(P)$
reduces to the classical variational problem of Herglotz and
Theorem~\ref{E-L thm Herg} is a corollary
of our Theorem~\ref{Thm E-L eq time delay}.
\end{remark}

In order to simplify expressions, and in agreement with
Theorem~\ref{Thm E-L eq time delay}, from now on we use the notation
$$
\lambda(t):= e^{-\int^t_a \partial_6 L[x, z]_\tau(\theta) d\theta}.
$$
The following theorem gives a generalization of the
DuBois--Reymond condition  for classical variational problems \cite{Cesari1983} and generalizes
the Dubois--Reymond condition for variational problems with time delay of \cite{FredericoTorres2012}.

\begin{theorem}[DuBois--Reymond conditions for variational
problems of Herglotz type with time delay]
\label{Th. DuBois-R time delay}
If $x$ is a generalized extremal with time delay such that
\begin{equation}
\label{Extra Hip}
\partial_4L[x,z]_\tau(t+\tau)\cdot \dot{x}(t)
+\partial_5L[x,z]_\tau(t+\tau)\cdot\ddot{x}(t)=0
\end{equation}
for all $t\in [a-\tau, b-\tau]$, then $x$ satisfies the following equations:
\begin{multline}
\label{DuBois-R (1)}
\frac{d}{dt}\left\{\lambda(t) L[x,z]_\tau(t) - \dot{x}(t)\left[\lambda(t) \partial_3
L[x,z]_\tau(t) + \lambda(t+\tau)\partial_5L[x,z]_\tau(t+\tau)\right]\right\}\\
=\lambda(t)\partial_1L[x,z]_\tau(t)
\end{multline}
for $a \leq t\leq b-\tau$, and
\begin{equation}
\label{DuBois-R (2)}
\frac{d}{dt}\left\{\lambda(t)\left[L[x,z]_\tau(t)-\dot{x}(t)\partial_3L[x,z]_\tau(t)\right]\right\}
=\lambda(t)\partial_1L[x,z]_\tau(t)
\end{equation}
for $b-\tau \leq t \leq b$.
\end{theorem}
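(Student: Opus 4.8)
The plan is to differentiate each candidate quantity directly and reduce it using the Euler--Lagrange equations of Theorem~\ref{Thm E-L eq time delay} together with the hypothesis \eqref{Extra Hip}, treating the two $t$-ranges separately. Two elementary facts are used throughout. First, from the definition of $\lambda$ one has $\dot\lambda(t)=-\lambda(t)\,\partial_6L[x,z]_\tau(t)$. Second, since $\dot z(t)=L[x,z]_\tau(t)$, the total derivative of $L$ along $x$ is
\[
\frac{d}{dt}L[x,z]_\tau(t)=\partial_1L+\dot x(t)\,\partial_2L+\ddot x(t)\,\partial_3L+\dot x(t-\tau)\,\partial_4L+\ddot x(t-\tau)\,\partial_5L+L[x,z]_\tau(t)\,\partial_6L,
\]
with every $\partial_iL$ evaluated at $[x,z]_\tau(t)$. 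I will also use the Euler--Lagrange equations in the compact ``product'' form in which they already appear inside the proof of Theorem~\ref{Thm E-L eq time delay}, namely
\[
\frac{d}{dt}\bigl[\lambda(t)\partial_3L[x,z]_\tau(t)+\lambda(t+\tau)\partial_5L[x,z]_\tau(t+\tau)\bigr]=\lambda(t)\partial_2L[x,z]_\tau(t)+\lambda(t+\tau)\partial_4L[x,z]_\tau(t+\tau)
\]
on $[a,b-\tau]$ and $\frac{d}{dt}[\lambda(t)\partial_3L[x,z]_\tau(t)]=\lambda(t)\partial_2L[x,z]_\tau(t)$ on $[b-\tau,b]$; these are equivalent to \eqref{E-L delay 1} and \eqref{E-L delay 2} respectively, the equivalence being the absorption of the $\partial_6L$ term via $\dot\lambda=-\lambda\partial_6L$.

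For $b-\tau\le t\le b$ I would expand $\frac{d}{dt}\{\lambda(t)[L-\dot x\partial_3L]\}$ by the product rule and insert $\dot\lambda=-\lambda\partial_6L$ together with the formula for $dL/dt$ above. The term $\lambda\partial_3L\,\ddot x$ cancels the $-\lambda\ddot x\,\partial_3L$ produced by differentiating $\dot x$, the $\dot x$-coefficient collapses to $\lambda\dot x(\partial_2L-\tfrac{d}{dt}\partial_3L)$, which \eqref{E-L delay 2} rewrites as $-\lambda\dot x\,\partial_3L\,\partial_6L$, and the several $\partial_6L$-proportional terms---from $\dot\lambda$, from the $L\,\partial_6L$ piece of $dL/dt$, and from the Euler--Lagrange substitution---cancel in pairs. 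What remains is $\lambda\partial_1L+\lambda[\partial_4L\,\dot x(t-\tau)+\partial_5L\,\ddot x(t-\tau)]$, and the bracket is precisely the left-hand side of \eqref{Extra Hip} with $t$ replaced by $t-\tau$; since $t-\tau\in[b-2\tau,b-\tau]\subseteq[a-\tau,b-\tau]$, it vanishes, giving \eqref{DuBois-R (2)}.

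For $a\le t\le b-\tau$ the computation runs along the same lines but carries the additional delayed term $\lambda(t+\tau)\partial_5L[x,z]_\tau(t+\tau)$. After the product rule the two $\ddot x$-terms cancel, the product-form equation \eqref{E-L delay 1} replaces $\frac{d}{dt}[\lambda\partial_3L+\lambda(t+\tau)\partial_5L(t+\tau)]$ and annihilates the $\dot x\,\partial_2L$ contribution, and the $\partial_6L$-terms cancel exactly as before. The surviving remainder is $\lambda\partial_1L$ together with the two delay expressions $\lambda[\partial_4L[x,z]_\tau(t)\dot x(t-\tau)+\partial_5L[x,z]_\tau(t)\ddot x(t-\tau)]$ and $-\lambda(t+\tau)[\partial_4L[x,z]_\tau(t+\tau)\dot x(t)+\partial_5L[x,z]_\tau(t+\tau)\ddot x(t)]$. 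The second is \eqref{Extra Hip} evaluated at $t$ and the first is \eqref{Extra Hip} evaluated at $t-\tau$; both arguments lie in $[a-\tau,b-\tau]$ for $t\in[a,b-\tau]$, so both brackets vanish and \eqref{DuBois-R (1)} follows.

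The only real difficulty I foresee is organizational: keeping track of which partials sit at $t$ and which at $t+\tau$, and noticing that \eqref{Extra Hip} must be invoked twice---once at the argument $t$ and once at the shifted argument $t-\tau$---with a verification that both arguments stay inside the interval $[a-\tau,b-\tau]$ on which the hypothesis is posed. The subtlest cancellation is that of the $\partial_6L$ terms, which works only because the contribution of $\dot\lambda=-\lambda\partial_6L$ is paired against the $L\,\partial_6L$ term coming from $\dot z=L$ in $dL/dt$; a sign slip there is the most likely source of error.
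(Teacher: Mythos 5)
Your proposal is correct and follows essentially the same route as the paper's proof: the same expansion of the total derivative using $\dot z=L$ and $\dot\lambda=-\lambda\,\partial_6L$, the same cancellations, the same substitution of the Euler--Lagrange equations in product form, and the same double invocation of hypothesis \eqref{Extra Hip} (once at $t$, once at the shifted argument $t-\tau$). The only difference is presentational: the paper carries out the identical computation under an integral sign, writing $\int_a^t\frac{d}{ds}\{\cdots\}\,ds$ (respectively $\int_t^b$) and concluding by the arbitrariness of $t$, whereas you differentiate pointwise, which is slightly more direct but mathematically the same argument.
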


\begin{proof}
In order to prove equation \eqref{DuBois-R (1)},
let $t \in [a,b-\tau]$ be arbitrary. Note that
\begin{align*}
& \int_a^t \frac{d}{ds}\Bigl\{\lambda(s) L[x,z]_\tau(s)
- \dot{x}(s)\left[\lambda(s) \partial_3L[x,z]_\tau(s)
+ \lambda(s+\tau)\partial_5L[x,z]_\tau(s+\tau)\right]\Bigr\}ds\\
&= \int_a^t \Big\{ -\partial_6L[x,z]_\tau(s)\lambda(s) L[x,z]_\tau(s)
+\lambda(s)\Big[\partial_1L[x,z]_\tau(s)+\partial_2L[x,z]_\tau(s) \dot{x}(s)\\
&+ \partial_3L[x,z]_\tau(s)\ddot{x}(s)+\partial_4L[x,z]_\tau(s)\dot{x}(s-\tau)
+\partial_5L[x,z]_\tau(s)\ddot{x}(s-\tau)\\
&+ \partial_6L[x,z]_\tau(s)  L[x,z]_\tau(s)\Big] -\ddot{x}(s)\Big[\lambda(s)
\partial_3L[x,z]_\tau(s)+\lambda(s+\tau)\partial_5L[x,z]_\tau(s+\tau)\Big]\\
&-\dot{x}(s)\frac{d}{ds}\Big[\lambda(s) \partial_3L[x,z]_\tau(s)
+ \lambda(s+\tau)\partial_5L[x,z]_\tau(s+\tau)\Big] \Big\}ds.
\end{align*}
Canceling symmetrical terms, we get
\begin{align*}
& \int_a^t \frac{d}{ds}\left\{\lambda(s) L[x,z]_\tau(s)
- \dot{x}(s)\left[\lambda(s) \partial_3L[x,z]_\tau(s)
+ \lambda(s+\tau)\partial_5L[x,z]_\tau(s+\tau)\right]\right\}ds\\
&= \int_a^t \Big( \lambda(s)\partial_1L[x,z]_\tau(s)
+\lambda(s) \partial_2L[x,z]_\tau(s) \dot{x}(s)
-\ddot{x}(s)\lambda(s+\tau)\partial_5L[x,z]_\tau(s+\tau)\\
&-\dot{x}(s)\frac{d}{ds}\left[\lambda(s) \partial_3 L[x,z]_\tau(s)
+ \lambda(s+\tau)\partial_5L[x,z]_\tau(s+\tau)\right] \Big)ds \\
&+\int_a^t \Big(  \lambda(s)\partial_4L[x,z]_\tau(s)\dot{x}(s-\tau)
+ \lambda(s)\partial_5L[x,z]_\tau(s)\ddot{x}(s-\tau)\Big) ds.
\end{align*}
Observe that, by hypothesis \eqref{Extra Hip}, the last integral is null
and by substitution of the Euler--Lagrange equation \eqref{E-L delay 1} one gets
\begin{multline*}
\int_a^t \frac{d}{ds}\left\{\lambda(s) L[x,z]_\tau(s)
- \dot{x}(s)\left[\lambda(s) \partial_3L[x,z]_\tau(s)
+ \lambda(s+\tau)\partial_5L[x,z]_\tau(s+\tau)\right]\right\}ds\\
= \int_a^t \Big( \lambda(s)\partial_1L[x,z]_\tau(s)-\lambda(s+\tau)
\big[\partial_4L[x,z]_\tau(s+\tau) \dot{x}(s)
+\ddot{x}(s)\partial_5L[x,z]_\tau(s+\tau)\big]\Big)ds.
\end{multline*}
Using hypothesis \eqref{Extra Hip} in the right hand side
of the last equation, we conclude that
\begin{multline*}
\int_a^t \frac{d}{ds}\left\{\lambda(s) L[x,z]_\tau(s)
- \dot{x}(s)\left[\lambda(s) \partial_3L[x,z]_\tau(s)
+ \lambda(s+\tau)\partial_5L[x,z]_\tau(s+\tau)\right]\right\}ds \\
= \int_a^t \lambda(s)\partial_1L[x,z]_\tau(s) ds.
\end{multline*}
Condition \eqref{DuBois-R (1)} follows from the arbitrariness of $t\in [a,b-\tau]$.
In order to prove equation \eqref{DuBois-R (2)},
let $t \in[b-\tau, b]$ be arbitrary. Note that
\begin{align*}
& \int_t^b \frac{d}{ds}\left\{\lambda(s) L[x,z]_\tau(s)
- \lambda(s)\dot{x}(s)\partial_3 L[x,z]_\tau (s)\right\}ds\\
&= \int_t^b \Big\{ -\partial_6L[x,z]_\tau(s)\lambda(s) L[x,z]_\tau(s)
+\lambda(s)\Big[\partial_1L[x,z]_\tau(s)+\partial_2L[x,z]_\tau(s) \dot{x}(s)\\
&+\partial_3L[x,z]_\tau(s)\ddot{x}(s)+\partial_4L[x,z]_\tau(s)
\dot{x}(s-\tau)+\partial_5L[x,z]_\tau(s)\ddot{x}(s-\tau)\\
&+\partial_6L[x,z]_\tau(s)  L[x,z]_\tau(s)\Big]
-\ddot{x}(s)\lambda(s) \partial_3L[x,z]_\tau(s)
-\dot{x}(s)\frac{d}{ds}\left[\lambda(s)
\partial_3L[x,z]_\tau(s)\right] \Big\}ds.
\end{align*}
Cancelling symmetrical terms, the previous equation becomes
\begin{align*}
& \int_t^b \frac{d}{ds}\left\{\lambda(s) L[x,z]_\tau(s)
- \lambda(s)\dot{x}(s)\partial_3 L[x,z]_\tau (s)\right\}ds  \\
&= \int_t^b \Big\{\lambda(s)\big(\partial_1L[x,z]_\tau(s)+\partial_2L[x,z]_\tau(s) \dot{x}(s)\big)
- \dot{x}(s)\frac{d}{ds}\left[\lambda(s) \partial_3L[x,z]_\tau(s)\right] \Big\}ds\\
& \quad +\int_t^b \Big\{\lambda(s)\big(\partial_4L[x,z]_\tau(s)\dot{x}(s-\tau)
+\partial_5L[x,z]_\tau(s)\ddot{x}(s-\tau)\big)\Big\} ds.
\end{align*}
Substituting the Euler--Lagrange equation \eqref{E-L delay 2} and using
the hypothesis \eqref{Extra Hip} in the last integral, we conclude that
\begin{align*} & \int_t^b \frac{d}{ds}\left\{\lambda(s) L[x,z]_\tau(s)
- \lambda(s)\dot{x}(s)\partial_3 L[x,z]_\tau (s)\right\}ds
= \int_t^b \lambda(s)\partial_1L[x,z]_\tau(s) ds.
\end{align*}
Condition \eqref{DuBois-R (2)} follows from the arbitrariness of $t\in [b-\tau,b]$.
\end{proof}

\begin{remark}
For the classical variational problem and for the variational
problem of Herglotz (without delayed arguments), the hypothesis \eqref{Extra Hip}
is trivially satisfied. There is an inconsistency in the proof of the
DuBois--Reymond equations for the classical variational problem
with time delay recently obtained in \cite{FredericoTorres2012}:
the proof is correct if we suppose that
\begin{equation*}
\partial_4L[x,z]_\tau(t+\tau)\cdot \dot{x}(t)
+\partial_5L[x,z]_\tau(t+\tau)\cdot\ddot{x}(t)=0
\end{equation*}
along any extremal with time delay for all $t\in [a-\tau, b-\tau]$.
Such condition is trivially satisfied for the examples presented in
\cite{TorresTatianaGastao-2013,FredericoTorres2012}.
\end{remark}

Before presenting the extension of the famous Noether's first theorem
to variational problems of Herglotz type with time delay, we introduce
the definition of invariance and give two useful
necessary conditions for invariance.

\begin{definition}[Invariance with time delay]
\label{def invariance with time delay}
The one-parameter group of invertible $C^1$ transformations
\begin{equation}
\left\{
\begin{array}{c}
\bar{t}=\phi(t,x,\epsilon)=t+\sigma(t,x)\epsilon+o(\epsilon)\\
\bar{x}=\psi(t,x,\epsilon)=x+\xi(t,x)\epsilon+o(\epsilon)
\end{array} \label{global transf. delay}
\right.
\end{equation}
leave the functional $z$ defined by
\eqref{eq_Herg_delay}--\eqref{eq_initialvalue_Herg with delay}
invariant if
$$
\frac{d}{d\epsilon}\left[L\left(\bar{t},\bar{x}(\bar{t}),\frac{d\bar{x}}{d\bar{t}}(\bar{t}),
\bar{x}(\bar{t}-\tau),\frac{d\bar{x}}{d\bar{t}}(\bar{t}-\tau), \bar{z}(\bar{t})\right)
\cdot \frac{d\bar{t}}{dt}\right] \biggm\vert_{\epsilon=0}=0.
$$
\end{definition}

\begin{lemma}[Necessary condition for invariance with time delay I]
\label{invariance condt I}
If the functional $z$ defined by \eqref{eq_Herg_delay}--\eqref{eq_initialvalue_Herg with delay}
is invariant under the one-parameter group of transformations \eqref{global transf. delay},
then
$$
\frac{d\bar{z}}{d\epsilon}(t)\biggm\vert_{\epsilon=0}=0
$$
for each $t \in [a,b]$.
\end{lemma}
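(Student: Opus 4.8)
The plan is to mirror the proof of Lemma~\ref{lemma invariance} line by line, the only modification being the replacement of $L(t,x,\dot x,z)$ by the delayed Lagrangian. The delayed arguments are inert for this argument, since Definition~\ref{def invariance with time delay} already packages them into the single expression whose derivative in $\epsilon$ is assumed to vanish at $\epsilon=0$.

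First I would evaluate the defining equation \eqref{eq_Herg_delay} along the transformed curve, writing $\frac{d\bar z}{d\bar t}(\bar t)=L\bigl(\bar t,\bar x(\bar t),\frac{d\bar x}{d\bar t}(\bar t),\bar x(\bar t-\tau),\frac{d\bar x}{d\bar t}(\bar t-\tau),\bar z(\bar t)\bigr)$. Multiplying by $\frac{d\bar t}{dt}$ and using the chain rule turns the left-hand side into $\frac{d\bar z}{dt}(t)$, giving an identity of the form $\frac{d\bar z}{dt}(t)=L(\cdots)\cdot\frac{d\bar t}{dt}$.

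Next I would differentiate this identity with respect to $\epsilon$, interchange the $t$- and $\epsilon$-derivatives on the left, and set $\epsilon=0$. By Definition~\ref{def invariance with time delay} the derivative in $\epsilon$ of the right-hand side is zero at $\epsilon=0$, so $\frac{d}{dt}\bigl(\frac{d\bar z}{d\epsilon}\bigr)\big\vert_{\epsilon=0}=0$. Writing $h(t):=\frac{d\bar z}{d\epsilon}(t)\big\vert_{\epsilon=0}$, this says $\frac{dh}{dt}(t)=0$, so $h$ is constant on $[a,b]$.

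Finally I would fix the value of that constant using the prescribed initial condition $z(a)=\gamma$: exactly as in Lemma~\ref{lemma invariance}, $\bar z(\bar a)$ is then also fixed, and differentiating it in $\epsilon$ at $\epsilon=0$ --- treating separately the cases $\bar a=a$ and $\bar a\neq a$, where $\sigma(a,x)\neq 0$ in the latter --- yields $h(a)=0$. Since $h$ is constant, it follows that $h(t)=0$ for every $t\in[a,b]$. I anticipate no genuine obstacle here: the delay leaves the computation untouched, and the only points demanding attention are the harmless interchange of mixed derivatives and the boundary bookkeeping at $\bar a$, both carried out verbatim as in the undelayed case.
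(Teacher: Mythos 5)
Your proposal is correct and takes essentially the same approach as the paper: the paper's own proof of this lemma is just the one-line remark that it is ``similar to the one of Lemma~\ref{lemma invariance}'', and your argument is precisely that adaptation --- evaluating \eqref{eq_Herg_delay} along the transformed curve, differentiating in $\epsilon$ at $\epsilon=0$ and invoking Definition~\ref{def invariance with time delay} to get $\dot h\equiv 0$, then pinning $h(a)=0$ from the fixed initial condition (with the case split on $\bar a=a$ versus $\bar a\neq a$, $\sigma(a,x)\neq 0$). Your observation that the delayed arguments are inert in this computation is exactly why the paper could omit the details.
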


\begin{proof}
The proof is similar to the one of Lemma~\ref{lemma invariance}.
\end{proof}

The next result is a consequence of Lemma~\ref{invariance condt I}
and is useful in the proof of our Noether's first theorem
for variational problems of Herglotz with time delay.

\begin{lemma}[Necessary condition for invariance with time delay II]
\label{invariance condt II}
If the functional $z$ defined by \eqref{eq_Herg_delay}--\eqref{eq_initialvalue_Herg with delay}
is invariant under the one-parameter group of transformations \eqref{global transf. delay}, then
\begin{equation}
\label{invariance condt}
\begin{split}
\int_a^t\lambda(s)&\biggl[\partial_1L[x, z]_\tau(s)
\sigma(s)+\partial_2L[x, z]_\tau(s)\xi(s)
+\partial_3L[x, z]_\tau(s)(\dot{\xi}(s)
-\dot{x}(s)\dot{\sigma}(s)) \\
&+\partial_4L[x, z]_\tau(s)\xi(s-\tau)
+\partial_5 L[x, z]_\tau(s)\left(\dot{\xi}(s-\tau)
-\dot{x}(s-\tau)\dot{\sigma}(s-\tau)\right)\\
&+L[x, z]_\tau(s)\dot{\sigma}(s)\biggr]ds=0
\end{split}
\end{equation}
for each $t \in [a,b]$.
\end{lemma}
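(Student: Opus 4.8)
The plan is to adapt the $\epsilon$-differentiation argument behind Lemma~\ref{invariance condt I}, but to keep track of the full $t$-dependence so that the integrating factor $\lambda$ emerges. First I would set $\Theta(t):=\frac{d\bar{z}}{d\epsilon}(t)\big\vert_{\epsilon=0}$ and start, exactly as in the proof of Lemma~\ref{lemma invariance}, from the identity
\[
\frac{d\bar{z}}{dt}(t)=L\left(\bar{t},\bar{x}(\bar{t}),\frac{d\bar{x}}{d\bar{t}}(\bar{t}),\bar{x}(\bar{t}-\tau),\frac{d\bar{x}}{d\bar{t}}(\bar{t}-\tau),\bar{z}(\bar{t})\right)\cdot\frac{d\bar{t}}{dt}(t).
\]
Differentiating with respect to $\epsilon$, interchanging $\frac{d}{d\epsilon}$ and $\frac{d}{dt}$ on the left-hand side, and setting $\epsilon=0$ will yield a first-order linear differential equation for $\Theta$.

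The heart of the matter is the chain-rule expansion of $\frac{d}{d\epsilon}\bigl[L(\cdots)\cdot\frac{d\bar{t}}{dt}\bigr]\big\vert_{\epsilon=0}$. At $\epsilon=0$ one has $\bar{t}=t$, $\bar{x}=x$, $\bar{z}=z$ and $\frac{d\bar{t}}{dt}=1$, so the product rule produces the term $L[x,z]_\tau(t)\,\dot{\sigma}(t)$ from the factor $\frac{d\bar{t}}{dt}$, while each slot of $L$ contributes $\partial_iL[x,z]_\tau(t)$ times the $\epsilon$-derivative of that slot at $\epsilon=0$. I would compute these derivatives in turn: the time slot gives $\sigma$; the slot $\bar{x}(\bar{t})$ gives $\xi$; the velocity slot $\frac{d\bar{x}}{d\bar{t}}(\bar{t})=\frac{d\bar{x}/dt}{d\bar{t}/dt}$ gives $\dot{\xi}-\dot{x}\dot{\sigma}$ by the quotient rule; and the last slot $\bar{z}(\bar{t})$ gives precisely $\Theta(t)$, which is exactly the source of the $\partial_6L\cdot\Theta$ coefficient. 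Assembling everything gives $\dot{\Theta}=\partial_6L[x,z]_\tau\,\Theta+G$, where $G$ denotes the bracketed integrand of \eqref{invariance condt} without the factor $\lambda$.

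The two delayed slots $\bar{x}(\bar{t}-\tau)$ and $\frac{d\bar{x}}{d\bar{t}}(\bar{t}-\tau)$ are the delicate point, and I expect this to be the main obstacle: one must handle carefully the action of the group on the retarded argument and verify that $\epsilon$-differentiation reproduces the undelayed formulas shifted by $\tau$, namely $\xi(s-\tau)$ and $\dot{\xi}(s-\tau)-\dot{x}(s-\tau)\dot{\sigma}(s-\tau)$, so that the $\partial_4L$ and $\partial_5L$ terms of \eqref{invariance condt} come out with the correct form.

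Finally, I would integrate the linear equation with the integrating factor $\lambda(t)=e^{-\int_a^t\partial_6L[x,z]_\tau(\theta)\,d\theta}$, which recasts it as $\frac{d}{dt}\bigl(\lambda\,\Theta\bigr)=\lambda\,G$. Integrating from $a$ to $t$ gives $\lambda(t)\Theta(t)-\Theta(a)=\int_a^t\lambda(s)\,G(s)\,ds$. Since Lemma~\ref{invariance condt I} asserts $\Theta\equiv0$ on $[a,b]$, both boundary terms vanish and the right-hand side is zero, which is exactly \eqref{invariance condt}. The genuinely new ingredient beyond Lemma~\ref{invariance condt I} is therefore just this integrating-factor step, together with the delayed chain-rule bookkeeping.
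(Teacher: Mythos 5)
Your proposal is correct and follows essentially the same route as the paper's own proof: differentiate the identity $\frac{d\bar{z}}{dt}=L(\cdots)\cdot\frac{d\bar{t}}{dt}$ with respect to $\epsilon$ at $\epsilon=0$, establish $\frac{d}{d\epsilon}\frac{d\bar{x}}{d\bar{t}}\big\vert_{\epsilon=0}=\dot{\xi}-\dot{x}\dot{\sigma}$ (your quotient-rule computation is equivalent to the paper's product-rule derivation), shift it to handle the delayed slots, solve the resulting linear ODE for $h=\frac{d\bar{z}}{d\epsilon}\big\vert_{\epsilon=0}$ with the integrating factor $\lambda$, and invoke Lemma~\ref{invariance condt I} to kill both sides. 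The delayed terms you flag as the ``main obstacle'' are in fact handled in the paper exactly as you suggest, by applying the undelayed formulas at $s-\tau$, so your outline matches the published argument step for step.
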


\begin{proof}
Since
$$
\frac{d\bar{z}}{dt}(t)
=L\left(\bar{t},\bar{x}(\bar{t}),\frac{d\bar{x}}{d\bar{t}}(\bar{t}),\bar{x}(\bar{t}
-\tau),\frac{d\bar{x}}{d\bar{t}}(\bar{t}-\tau), \bar{z}(\bar{t})\right)\cdot \frac{d\bar{t}}{dt}(t)
$$
and
$$
\frac{d\bar{t}}{dt}(t)\biggm\vert_{\epsilon=0}=1,
\quad \displaystyle\frac{d}{d\epsilon}\frac{d\bar{t}}{dt}(t)\biggm\vert_{\epsilon=0}
=\frac{d}{dt}\sigma(t,x),
$$
we get
\begin{equation*}
\frac{d}{d\epsilon}\left(\frac{d\bar{z}}{dt}\right)(t)\biggm\vert_{\epsilon=0}
=\frac{d L}{d\epsilon}\biggm\vert_{\epsilon=0}
\cdot\frac{d\bar{t}}{dt}(t)\biggm\vert_{\epsilon=0}
+L \cdot \frac{d}{d\epsilon}\frac{d\bar{t}}{dt}(t)\biggm\vert_{\epsilon=0}
=\frac{d L}{d\epsilon}\biggm\vert_{\epsilon=0}
+L \cdot \frac{d}{dt}\sigma(t,x).
\end{equation*}
Defining $h(t):=\displaystyle\frac{d\bar{z}}{d\epsilon}(t)\big\vert_{\epsilon=0}$,
\begin{multline}
\label{derivada}
\dot{h}(t)=
\displaystyle \partial_1L\frac{d\bar{t}}{d\epsilon}(t)\biggm\vert_{\epsilon=0}
+\partial_2L\frac{d\bar{x}}{d\epsilon}(t)\biggm\vert_{\epsilon=0}
+\partial_3L\frac{d}{d\epsilon}\frac{d\bar{x}}{d\bar{t}}(t)\biggm\vert_{\epsilon=0}
+\partial_4L\frac{d\bar{x}}{d\epsilon}(t-\tau)\biggm\vert_{\epsilon=0}\\
+ \displaystyle \partial_5L\frac{d}{d\epsilon}\frac{d\bar{x}}{d\bar{t}}(t-\tau)\biggm\vert_{\epsilon=0}
+\partial_6L\frac{d\bar{z}}{d\epsilon}(t)\biggm\vert_{\epsilon=0}+L\dot{\sigma}.
\end{multline}
Next we prove that
$$
\frac{d}{d\epsilon}\frac{d\bar{x}}{d\bar{t}}\biggm\vert_{\epsilon=0}
=\dot{\xi}-\dot{x}\dot{\sigma}.
$$
Because
$$
\frac{d\bar{x}}{dt}=\frac{d\bar{x}}{d\bar{t}}\cdot \frac{d\bar{t}}{dt}
=\frac{d\bar{x}}{d\bar{t}}\cdot\left(\frac{\partial\bar{t}}{\partial t}
+ \frac{\partial\bar{t}}{\partial x}\dot{x}\right),
$$
one has
\begin{equation}
\label{derivada 1}
\begin{split}
\frac{d}{d \epsilon}\frac{d\bar{x}}{dt}\biggm\vert_{\epsilon=0}
&= \frac{d}{d \epsilon}\left(\frac{d\bar{x}}{d\bar{t}}\cdot\left(
\frac{\partial\bar{t}}{\partial t}
+ \frac{\partial\bar{t}}{\partial x}\dot{x}\right)\right)\biggm\vert_{\epsilon=0}\\
&= \frac{d}{d \epsilon}\left(\frac{d\bar{x}}{d\bar{t}} \right)\biggm\vert_{\epsilon=0}
+\frac{d\bar{x}}{d\bar{t}}\biggm\vert_{\epsilon=0}
\cdot \frac{d}{d \epsilon}\left(\frac{\partial\bar{t}}{\partial t}
+ \frac{\partial\bar{t}}{\partial x}\dot{x} \right)\biggm\vert_{\epsilon=0}.
\end{split}
\end{equation}
On the other hand, since
\begin{equation*}
\frac{d}{d \epsilon}\frac{d\bar{x}}{dt}\biggm\vert_{\epsilon=0}
=\frac{d}{d \epsilon} \left(
\frac{\partial\bar{x}}{\partial t} + \frac{\partial\bar{x}}{\partial x}\dot{x}
\right)\biggm\vert_{\epsilon=0},
\end{equation*}
we get from equality \eqref{derivada 1} that
\begin{equation*}
\frac{\partial}{\partial t}\frac{d\bar{x}}{d\epsilon}\biggm\vert_{\epsilon=0}
+ \dot{x}\frac{\partial}{\partial x}\frac{d\bar{x}}{d\epsilon}\biggm\vert_{\epsilon=0}
= \frac{d}{d \epsilon}\frac{d\bar{x}}{d\bar{t}}\biggm\vert_{\epsilon=0}
+ \dot{x}\left(\frac{\partial\sigma}{\partial t}
+ \frac{\partial\sigma}{\partial x}\dot{x}\right)
\end{equation*}
and therefore
\begin{equation*}
\frac{\partial \xi}{\partial t} + \dot{x}
\frac{\partial \xi}{\partial x}
= \frac{d}{d \epsilon}\frac{d\bar{x}}{d\bar{t}}\biggm\vert_{\epsilon=0}
+ \dot{x}\dot{\sigma},
\end{equation*}
which is equilavent to
\begin{equation}
\label{derivada 4}
\frac{d}{d \epsilon}\frac{d\bar{x}}{d\bar{t}}\biggm\vert_{\epsilon=0}
= \dot{\xi}- \dot{x}\dot{\sigma}.
\end{equation}
Substituting  \eqref{derivada 4} into \eqref{derivada}, we get
\begin{multline*}
\dot{h}= \partial_1L\sigma+\partial_2L\xi+\partial_3L(\dot{\xi}
-\dot{x}\dot{\sigma})+\partial_4L\xi(t-\tau)\\
+\partial_5L(\dot{\xi}(t-\tau)-\dot{x}(t-\tau)\dot{\sigma}(t-\tau))
+\partial_6L h+L\dot{\sigma}.
\end{multline*}
Therefore, $h$ satisfies a first order differential equation whose solution is
\begin{multline*}
\lambda(t)h(t)-h(a)
= \int_a^t\lambda(s)\left[\partial_1L\sigma+\partial_2L\xi
+\partial_3L(\dot{\xi}-\dot{x}\dot{\sigma})+\partial_4L\xi(s-\tau)\right.\\
\left.+\partial_5L\left(\dot{\xi}(s-\tau)-\dot{x}(s-\tau)\dot{\sigma}(s-\tau)\right)
+L\dot{\sigma}\right]ds.
\end{multline*}
Finally, since functional $z$ defined by \eqref{eq_Herg_delay}--\eqref{eq_initialvalue_Herg with delay}
is invariant under the one-parameter group of transformations \eqref{global transf. delay},
$h\equiv 0$ by Lemma~\ref{invariance condt I} and we obtain \eqref{invariance condt}.
\end{proof}

The next result establishes an extension of the celebrated Noether first theorem
to variational problems of Herglotz type with time delay.

\begin{theorem}[Noether's first theorem for variational problems of Herglotz type with time delay]
\label{Thm Noether with delay}
If functional $z$ defined by \eqref{eq_Herg_delay}--\eqref{eq_initialvalue_Herg with delay}
is invariant under the one-parameter group of transformations \eqref{global transf. delay}, then
the quantities defined by
\begin{multline}\label{Quant 1}
\left[\lambda(t)\partial_3L[x,z]_\tau(t)+\lambda(t+\tau)\partial_5L[x,z]_\tau(t+\tau)\right]\xi(t)\\
+ \left[\lambda(t)L[x,z]_\tau(t)-\dot{x}(t)\left(\lambda(t)\partial_3L[x,z]_\tau(t)
+\lambda(t+\tau)\partial_5L[x,z]_\tau(t+\tau)\right)\right]\sigma(t)
\end{multline}
for $a\leq t\leq b-\tau$, and
\begin{equation}\label{Quant 2}
\lambda(t)\left[\partial_3L[x,z]_\tau(t)\xi(t) +(L[x,z]_\tau(t)
-\dot{x}(t)\partial_3L[x,z]_\tau(t))\sigma(t)\right]
\end{equation}
for $b-\tau \leq t \leq b$, are conserved along the generalized
extremals with time delay that satisfy
\begin{equation}
\label{extra-extra}
\partial_4L[x,z]_\tau(t+\tau)\cdot \dot{x}(t)
+\partial_5L[x,z]_\tau(t+\tau)\cdot\ddot{x}(t)=0
\end{equation}
for all $t\in [a-\tau, b-\tau]$, and
\begin{equation}
\label{Extra Hip Noeth}
\partial_4L[x,z]_\tau(t+\tau)\xi(t)+\partial_5L[x,z]_\tau(t+\tau)
\left(\dot{\xi}(t)-\dot{x}(t)\dot{\sigma}(t)\right)=0
\end{equation}
for all $t \in [a, b-\tau]$.
\end{theorem}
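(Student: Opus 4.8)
The plan is to show that each of the two quantities is constant by differentiating it in $t$ and checking that the derivative vanishes, splitting $[a,b]$ at $b-\tau$ exactly as in the Euler--Lagrange and DuBois--Reymond theorems. The engine of the argument is a pointwise form of the invariance condition \eqref{invariance condt}: since the integral there vanishes for \emph{every} $t\in[a,b]$ and its integrand is continuous, the integrand itself vanishes identically, giving (with all $L$-derivatives at $[x,z]_\tau(t)$ unless a shift is shown)
\begin{multline*}
\lambda(t)\bigl[\partial_1L\,\sigma+\partial_2L\,\xi+\partial_3L(\dot\xi-\dot x\dot\sigma)+L\dot\sigma\bigr]\\
=-\lambda(t)\bigl[\partial_4L\,\xi(t-\tau)+\partial_5L(\dot\xi(t-\tau)-\dot x(t-\tau)\dot\sigma(t-\tau))\bigr].
\end{multline*}
I would also record the elementary identity $\dot\lambda(t)=-\lambda(t)\,\partial_6L[x,z]_\tau(t)$, which follows at once from the definition of $\lambda$.

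For $a\le t\le b-\tau$ I would write \eqref{Quant 1} as $P(t)\xi(t)+Q(t)\sigma(t)$, where $P:=\lambda(t)\partial_3L(t)+\lambda(t+\tau)\partial_5L(t+\tau)$ and $Q:=\lambda(t)L(t)-\dot x(t)P(t)$. Then $\dot Q=\lambda\partial_1L$ is \emph{precisely} the DuBois--Reymond equation \eqref{DuBois-R (1)} (whose validity requires hypothesis \eqref{extra-extra}, which is exactly why that hypothesis is assumed), while combining $\dot\lambda=-\lambda\partial_6L$ with the Euler--Lagrange equation \eqref{E-L delay 1} collapses $\dot P$ to the clean expression $\dot P=\lambda\partial_2L(t)+\lambda(t+\tau)\partial_4L(t+\tau)$, the $\partial_6$-terms cancelling against those produced by $\dot\lambda$. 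Substituting these into $\dot P\xi+P\dot\xi+\dot Q\sigma+Q\dot\sigma$ and factoring $\dot\xi-\dot x\dot\sigma$ out of $P$, the derivative of \eqref{Quant 1} equals
\begin{multline*}
\lambda\bigl[\partial_1L\sigma+\partial_2L\xi+\partial_3L(\dot\xi-\dot x\dot\sigma)+L\dot\sigma\bigr]\\
+\lambda(t+\tau)\bigl[\partial_4L(t+\tau)\xi+\partial_5L(t+\tau)(\dot\xi-\dot x\dot\sigma)\bigr].
\end{multline*}
The second bracket is annihilated directly by \eqref{Extra Hip Noeth}; the first bracket is rewritten through the pointwise invariance identity above and then annihilated by the time-shift of \eqref{Extra Hip Noeth} (the instance obtained by replacing $t$ with $t-\tau$). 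Hence the derivative is zero and \eqref{Quant 1} is conserved.

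For $b-\tau\le t\le b$ the computation is the strict analogue but simpler. Writing \eqref{Quant 2} as $\tilde P\xi+\tilde Q\sigma$ with $\tilde P:=\lambda(t)\partial_3L(t)$ and $\tilde Q:=\lambda(t)\bigl(L(t)-\dot x(t)\partial_3L(t)\bigr)$, the DuBois--Reymond equation \eqref{DuBois-R (2)} gives $\dot{\tilde Q}=\lambda\partial_1L$, and \eqref{E-L delay 2} together with $\dot\lambda=-\lambda\partial_6L$ gives $\dot{\tilde P}=\lambda\partial_2L$. The same regrouping reduces the derivative of \eqref{Quant 2} to $\lambda\bigl[\partial_1L\sigma+\partial_2L\xi+\partial_3L(\dot\xi-\dot x\dot\sigma)+L\dot\sigma\bigr]$, which vanishes by the invariance identity and the shift of \eqref{Extra Hip Noeth}; there is no surviving $t+\tau$ term on this interval, which is why \eqref{Quant 2} has the simpler form.

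I expect the main obstacle to be bookkeeping rather than conceptual: keeping the three time arguments $t$, $t+\tau$ and $t-\tau$ consistent when differentiating $\lambda(t+\tau)\partial_5L(t+\tau)$ and when substituting the Euler--Lagrange relation, and ensuring that the delay-generated contributions cancel through the \emph{correct} time-shifted instance of \eqref{Extra Hip Noeth} rather than the untranslated one. Throughout, the two hypotheses play distinct roles that the proof should make explicit: \eqref{extra-extra} is exactly what legitimizes invoking the DuBois--Reymond conditions, whereas \eqref{Extra Hip Noeth} and its shift are what clear the delayed Lagrangian terms that the invariance identity alone cannot absorb.
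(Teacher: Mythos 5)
Your proof is correct, but it runs in the opposite direction from the paper's. The paper \emph{derives} the conserved quantities: starting from Lemma~\ref{invariance condt II}, it performs the change of variables $s\mapsto s+\tau$ in the delayed terms, uses hypothesis \eqref{Extra Hip Noeth} to obtain the pointwise identity \eqref{eq null}, then integrates by parts over $[a,t-\tau]$ and over $[t_1,t_2]\subseteq[b-\tau,b]$; the Euler--Lagrange equations kill the $\xi$-terms of the resulting integrand, the DuBois--Reymond equations kill the $\sigma$-terms, and the conserved quantities \eqref{Quant 1}--\eqref{Quant 2} appear as the boundary terms of the integration by parts, whose differences must vanish. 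You instead \emph{verify} conservation: writing \eqref{Quant 1} and \eqref{Quant 2} as $P\xi+Q\sigma$, you note that \eqref{DuBois-R (1)}--\eqref{DuBois-R (2)} are literally the statements $\dot Q=\lambda\partial_1L$ (which is where \eqref{extra-extra} enters), while \eqref{E-L delay 1}--\eqref{E-L delay 2} combined with $\dot\lambda=-\lambda\,\partial_6L$ and the chain-rule fact $\frac{d}{dt}\lambda(t+\tau)=-\lambda(t+\tau)\,\partial_6L[x,z]_\tau(t+\tau)$ give $\dot P=\lambda\partial_2L+\lambda(t+\tau)\partial_4L(t+\tau)$ (respectively $\dot{\tilde P}=\lambda\partial_2L$); the regrouped derivative is then annihilated by the pointwise invariance identity together with \eqref{Extra Hip Noeth} and its time shift. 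I checked these computations and they are right. Both arguments use exactly the same ingredients (Lemma~\ref{invariance condt II}, the Euler--Lagrange and DuBois--Reymond conditions, and the two extra hypotheses); yours is shorter, avoids integration by parts and the change of variables under the integral, and makes the role of each hypothesis transparent, while the paper's has the advantage of producing the conserved quantities as output of the argument rather than requiring them as input.

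One point you should make explicit. Your pointwise invariance identity holds on all of $[a,b]$, but the time-shifted instance of \eqref{Extra Hip Noeth} that you use to clear its delayed right-hand side is only available when $t-\tau\in[a,b-\tau]$, i.e.\ for $t\ge a+\tau$. For $t\in[a,a+\tau)$ the delayed terms involve $\xi(t-\tau)$ and $\sigma(t-\tau)$ with $t-\tau<a$, and there you must invoke the same convention the paper states in its proof, namely that $\sigma$ and $\xi$ may be assumed null outside $[a,b]$, to conclude that those terms vanish. This is not a substantive gap --- the paper needs the identical convention to justify its change of variables --- but without it your first bracket is not yet known to vanish on $[a,a+\tau)$, which you need both for \eqref{Quant 1} and, in the case $b-\tau<a+\tau$, for \eqref{Quant 2}.
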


\begin{proof}
Suppose that the functional $z$ defined by \eqref{eq_Herg_delay}--\eqref{eq_initialvalue_Herg with delay}
is invariant  under the one-parameter group of transformations \eqref{global transf. delay} and that $x$
is a solution of the Euler--Lagrange equations \eqref{E-L delay 1}--\eqref{E-L delay 2}. From the necessary
condition for invariance with time delay II (Lemma \ref{invariance condt II}), we get that
\begin{multline*}
\int_a^t\lambda(s)\left[\partial_1L\sigma+\partial_2L\xi+
\partial_3L(\dot{\xi}-\dot{x}\dot{\sigma})+\partial_4L\xi(s-\tau)\right. \\
\left.+\partial_5L\left(\dot{\xi}(s-\tau)-\dot{x}(s-\tau)\dot{\sigma}(s-\tau)\right)+L\dot{\sigma}\right]ds=0
\end{multline*}
for each $t \in [a,b]$. Proceeding with a linear change of variable and noticing that we can assume
$\xi$ and $\sigma$ to be null outside $[a,b]$, the last equation is equivalent to
\begin{multline}
\label{eq inv}
\int_a^{t-\tau}\lambda(s)\left[\partial_1L\sigma+\partial_2L\xi
+\partial_3L(\dot{\xi}-\dot{x}\dot{\sigma})+L\dot{\sigma}\right]\\
+ \lambda(s+\tau)\left[\partial_4L(s+\tau)\xi+\partial_5L(s+\tau)
\left(\dot{\xi}(s)-\dot{x}(s)\dot{\sigma}(s)\right)\right]ds\\
+ \int_{t-\tau}^t\lambda(s)\left[\partial_1L\sigma+\partial_2L\xi
+\partial_3L(\dot{\xi}-\dot{x}\dot{\sigma})+L\dot{\sigma}\right]ds=0.
\end{multline}
Using hypothesis \eqref{Extra Hip Noeth}, equation \eqref{eq inv} implies that
\begin{align*}
\int_a^t \lambda(s)\left[\partial_1L\sigma+\partial_2L\xi
+\partial_3L(\dot{\xi}-\dot{x}\dot{\sigma})+L\dot{\sigma}\right]ds=0.
\end{align*}
From the arbitrariness of $t \in [a,b]$ we conclude that
\begin{align}
\label{eq null}
\partial_1L\sigma+\partial_2L\xi
+\partial_3L(\dot{\xi}-\dot{x}\dot{\sigma})+L\dot{\sigma}=0
\end{align}
for all $t \in [a,b]$. Then, equation \eqref{eq inv} becomes
\begin{align*}
\int_a^{t-\tau}&\Big(\lambda(s)\partial_1L\sigma+\left[\lambda(s)\partial_2L
+\lambda(s+\tau)\partial_4L(s+\tau)\right]\xi\\
& +\left[\lambda(s)\partial_3L+\lambda(s+\tau)\partial_5L(s+\tau)\right]\dot{\xi}\\
&+ \left[\lambda(s) L -\dot{x}\left(\lambda(s)\partial_3L
+\lambda(s+\tau)\partial_5L(s+\tau)\right)\right]\dot{\sigma}\Big)ds=0
\end{align*}
for $t \in[a+\tau,b]$. Using integration by parts, one has
\begin{align*}
&\int_a^{t-\tau}\Big(\lambda(s)\partial_1L\sigma+\left[\lambda(s)\partial_2L+\lambda(s+\tau)
\partial_4L(s+\tau)\right]\xi\\
& \qquad - \frac{d}{ds}\left[\lambda(s)\partial_3L+\lambda(s+\tau)\partial_5L(s+\tau)\right]\xi\\
& \qquad -\frac{d}{ds} \left[\lambda(s) L -\dot{x}\left(\lambda(s)\partial_3L
+\lambda(s+\tau)\partial_5L(s+\tau)\right)\right]\sigma \Big)ds\\
& + \Big[\left(\lambda(s)\partial_3L+\lambda(s+\tau)\partial_5L(s+\tau)\right)\xi\\
& \qquad +\left(\lambda(s) L -\dot{x}\left(\lambda(s)\partial_3L+\lambda(s+\tau)
\partial_5L(s+\tau)\right)\right)\sigma\Big]_a^{t-\tau} = 0.
\end{align*}
Observe that the terms in $\xi$ inside the integral are null
because $x$ satisfies the Euler--Lagrange equation
on $[a, b-\tau]$ and that, from the DuBois--Reymond equation \eqref{DuBois-R (1)},
the sum of the remaining terms of the integral is zero. This leads to
\begin{multline*}
\Big[\left(\lambda(s)\partial_3L+\lambda(s+\tau)\partial_5L(s+\tau)\right)\xi\\
+\left(\lambda(s) L -\dot{x}\left(\lambda(s)\partial_3L+\lambda(s+\tau)
\partial_5L(s+\tau)\right)\right)\sigma\Big]_a^{t-\tau} = 0
\end{multline*}
for every $t \in [a+\tau,b]$, which means that
\begin{equation*}
\Big(\lambda(s)\partial_3L+\lambda(t+\tau)\partial_5L(t+\tau)\Big)\xi
+\Big(\lambda(s) L -\dot{x}\left(\lambda(s)\partial_3L
+\lambda(t+\tau)\partial_5L(t+\tau)\right)\Big)\sigma
\end{equation*}
is constant for $t \in [a,b-\tau]$. Consider $[t_1,t_2] \subseteq [b-\tau,b]$.
From equation \eqref{eq null} one has
\begin{align*}
& \int_{t_1}^{t_2}(\lambda(s)\partial_1L\sigma
+\lambda(s)\partial_2L\xi+\lambda(s)\partial_3L\dot{\xi}
+\lambda(s)(L-\dot{x}\partial_3L)\dot{\sigma})ds=0.
\end{align*}
Using integration by parts, we get
\begin{multline*}
\int_{t_1}^{t_2}(\lambda(s)\partial_1L\sigma
+\lambda(s)\partial_2L\xi-\frac{d}{ds}\left(\lambda(s)\partial_3L\right)\xi
-\frac{d}{ds}\left[\lambda(s)(L-\dot{x}\partial_3L)\right]\sigma) ds\\
+\left[\lambda(s)\partial_3L\xi+\lambda(s)(L-\dot{x}\partial_3L)
\sigma\right]_{t_1}^{t_2}=0.
\end{multline*}
Observe that the terms in $\xi$ inside the integral are null because $x$
satisfies the Euler--Lagrange equation \eqref{E-L delay 2}
and that, from the DuBois--Reymond equation \eqref{DuBois-R (2)},
the sum of the remaining terms of the integral is zero. This leads to
\begin{equation*}
\left[\lambda(s)\partial_3L\xi+\lambda(s)(L-\dot{x}\partial_3L)\sigma\right]_{t_1}^{t_2}=0.
\end{equation*}
From the arbitrariness of $t_1, t_2 \in [b-\tau,b]$, we conclude that
\begin{equation*}
\lambda(s)\partial_3L\xi+\lambda(s)(L-\dot{x}\partial_3L)\sigma
\end{equation*}
is constant in $[b-\tau,b]$. This ends the proof of our main result.
\end{proof}

\begin{remark}
In the classical variational problem and in the variational problem of Herglotz,
the hypotheses \eqref{extra-extra} and \eqref{Extra Hip Noeth} are trivially satisfied.
\end{remark}

\begin{remark}
Our first Noether-type theorem is a generalization of Noether's first theorem
for the classical variational problem of Herglotz type presented in \cite{Georgieva2002},
that is, Theorem~\ref{1st Noether THM} is a corollary of Theorem~\ref{Thm Noether with delay}.
\end{remark}

Our results provide generalizations of the variational results with
time delay presented in \cite{FredericoTorres2012}. If the Lagrangian $L$
in the definition of $z$, \eqref{eq_Herg_delay}, does not depend on $z$,
then $\partial_6L \equiv 0$ and $\lambda(t)\equiv 1$. In that case, problem
$(P)$ reduces to the classical variational problem with time delay.
The Euler--Lagrange equations, the DuBois--Reymond conditions and Noether's first
theorem with time delay obtained by Frederico and Torres in
\cite{FredericoTorres2012} are particular cases of
Theorem~\ref{Thm E-L eq time delay}, Theorem~\ref{Th. DuBois-R time delay}
and Theorem \ref{Thm Noether with delay}, respectively.
In what follows we use the notation
$$
[x]_\tau:=\left(t,x(t),\dot{x}(t),x(t-\tau), \dot{x}(t-\tau)\right).
$$

\begin{corollary}[See \cite{FredericoTorres2012}]
\label{corol E-L delay}
If $x$ is an extremizer to the functional
\begin{equation}
\label{eq:prb:ft12}
\int_a^bL(t,x(t),\dot{x}(t),x(t-\tau), \dot{x}(t-\tau))dt,
\end{equation}
then $x$ satisfies the Euler--Lagrange equations
\begin{equation}
\label{corol E-L delay 1}
\partial_4L[x]_\tau(t+\tau)-\frac{d}{dt}\partial_5L[x]_\tau(t+\tau)
+\partial_2L[x]_\tau(t)-\frac{d}{dt}\partial_3L[x]_\tau(t)=0,
\end{equation}
$a\leq t \leq b-\tau$, and
\begin{equation}
\label{corol E-L delay 2}
\partial_2L[x]_\tau(t)-\frac{d}{dt}\partial_3L[x]_\tau(t) =0,
\end{equation}
$b-\tau \leq t \leq b$.
\end{corollary}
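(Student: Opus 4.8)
The plan is to realize the classical delayed functional \eqref{eq:prb:ft12} as a particular instance of the Herglotz problem $(P)$ and then simply specialize Theorem~\ref{Thm E-L eq time delay}. First I would introduce the auxiliary variable $z$ defined by $\dot{z}(t)=L(t,x(t),\dot{x}(t),x(t-\tau),\dot{x}(t-\tau))$ with initial condition $z(a)=\gamma$ for an arbitrary fixed real number $\gamma$. Integrating this relation over $[a,b]$ yields $z(b)=\gamma+\int_a^b L\,dt$, so that $z(b)$ and the functional \eqref{eq:prb:ft12} differ only by the constant $\gamma$. Consequently $x$ is an extremizer of \eqref{eq:prb:ft12} exactly when it is a solution of problem $(P)$ for this $L$; the only point requiring a moment of care is that the boundary and initial data match, which they do, since the admissibility conditions $x(b)=\beta$ and $x(t)=\delta(t)$ on $[a-\tau,a]$ are untouched by the reduction.

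The decisive observation is that in this situation the Lagrangian $L$ does not depend on its sixth argument $z$, so $\partial_6 L\equiv 0$ and therefore the weight simplifies to $\lambda(t)=e^{-\int_a^t\partial_6 L[x,z]_\tau(\theta)\,d\theta}\equiv 1$ for every $t$. Moreover, because $L$ is independent of $z$, one has $L[x,z]_\tau(t)=L[x]_\tau$ and $\partial_i L[x,z]_\tau(t)=\partial_i L[x]_\tau$ for $i=1,\ldots,5$, which aligns the six-argument notation of Theorem~\ref{Thm E-L eq time delay} with the five-argument notation $[x]_\tau$ used in the corollary.

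With these identifications I would substitute $\lambda\equiv 1$ and $\partial_6 L\equiv 0$ directly into the two generalized Euler--Lagrange equations \eqref{E-L delay 1} and \eqref{E-L delay 2}. In \eqref{E-L delay 1} the mixed products $\partial_5 L\,\partial_6 L$ and $\partial_3 L\,\partial_6 L$ vanish and both prefactors $\lambda(t+\tau)$ and $\lambda(t)$ collapse to $1$, leaving precisely \eqref{corol E-L delay 1} on $a\leq t\leq b-\tau$; similarly, \eqref{E-L delay 2} loses its $\partial_3 L\,\partial_6 L$ term and reduces to \eqref{corol E-L delay 2} on $b-\tau\leq t\leq b$. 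Since everything beyond the initial embedding is mechanical bookkeeping, I expect the only genuinely substantive step to be the verification that extremizing $z(b)$ in the Herglotz formulation is equivalent to extremizing \eqref{eq:prb:ft12}; once that equivalence is recorded, the statement follows immediately as a corollary of Theorem~\ref{Thm E-L eq time delay}.
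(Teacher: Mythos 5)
Your proposal is correct and matches the paper's own reasoning exactly: the paper also obtains this corollary by observing that when $L$ is independent of $z$ one has $\partial_6 L \equiv 0$ and $\lambda \equiv 1$, so that problem $(P)$ reduces to the classical delayed variational problem and the two Euler--Lagrange equations follow by direct substitution into Theorem~\ref{Thm E-L eq time delay}. Your additional remark that $z(b)=\gamma+\int_a^b L\,dt$, so extremizing $z(b)$ is equivalent to extremizing \eqref{eq:prb:ft12}, is the same embedding the paper uses (it is stated in the introduction for the non-delayed case and invoked implicitly here).
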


\begin{corollary}[Cf. \cite{FredericoTorres2012}]
\label{corol DuBois}
If $x$ is an extremizer to the functional \eqref{eq:prb:ft12} and
$$
\partial_4L[x]_\tau(t+\tau)\cdot \dot{x}(t) +\partial_5L[x]_\tau(t+\tau)
\cdot\ddot{x}(t)=0,
$$
$t\in[a-\tau,b-\tau]$, then $x$ satisfies the DuBois--Reymond equations
\begin{equation*}
\frac{d}{dt}\left\{L[x]_\tau(t) - \dot{x}(t)\left[\partial_3
L[x]_\tau(t) + \partial_5L[x]_\tau(t+\tau)\right]\right\}
=\partial_1L[x]_\tau(t),
\end{equation*}
$a \leq t\leq b-\tau$, and
\begin{equation*}
\frac{d}{dt}\left\{L[x]_\tau(t)-\dot{x}(t)\partial_3L[x]_\tau(t)\right\}
=\partial_1L[x]_\tau(t),
\end{equation*}
$b-\tau \leq t \leq b$.
\end{corollary}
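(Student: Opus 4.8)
The plan is to obtain this corollary as a direct specialization of Theorem~\ref{Th. DuBois-R time delay}, exploiting the fact that the functional \eqref{eq:prb:ft12} is exactly the Herglotz problem $(P)$ in the degenerate case where the Lagrangian does not depend on $z$. First I would record the two simplifications this produces. Since $L$ in \eqref{eq:prb:ft12} is independent of $z$, one has $\partial_6 L \equiv 0$, so that
$$
\lambda(t)=e^{-\int_a^t\partial_6L[x]_\tau(\theta)\,d\theta}=e^{0}=1
$$
for every $t\in[a,b]$, and the expanded notation $[x,z]_\tau$ collapses to $[x]_\tau$.

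Next I would verify that the extremizer $x$ qualifies as a generalized extremal with time delay, which is the standing hypothesis of Theorem~\ref{Th. DuBois-R time delay}. This is precisely the content of Corollary~\ref{corol E-L delay}: an extremizer of \eqref{eq:prb:ft12} satisfies the Euler--Lagrange equations \eqref{corol E-L delay 1}--\eqref{corol E-L delay 2}, which are exactly the reduced forms of \eqref{E-L delay 1}--\eqref{E-L delay 2} once $\lambda\equiv 1$ and $\partial_6L\equiv 0$ are substituted. I would also note that the hypothesis imposed in the present corollary coincides with hypothesis \eqref{Extra Hip} of Theorem~\ref{Th. DuBois-R time delay} under the identification $[x,z]_\tau=[x]_\tau$, so that the additional assumption needed to invoke the theorem is available.

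With these observations in place the conclusion is immediate: substituting $\lambda\equiv 1$ into \eqref{DuBois-R (1)} and \eqref{DuBois-R (2)}, and writing $[x]_\tau$ for $[x,z]_\tau$, yields exactly the two stated DuBois--Reymond equations on $[a,b-\tau]$ and $[b-\tau,b]$, respectively. I expect no genuine obstacle here; the only point deserving care is confirming that an extremizer in the classical variational sense delivers a solution of the delayed Euler--Lagrange equations required to apply Theorem~\ref{Th. DuBois-R time delay}, and this is settled once and for all by Corollary~\ref{corol E-L delay}.
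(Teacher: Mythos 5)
Your proposal is correct and follows exactly the paper's own reasoning: the paper also obtains this corollary by observing that when $L$ does not depend on $z$ one has $\partial_6 L \equiv 0$ and hence $\lambda \equiv 1$, so that problem $(P)$ reduces to the classical delayed variational problem and the stated equations follow by specializing Theorem~\ref{Th. DuBois-R time delay}. Your additional care in checking, via Corollary~\ref{corol E-L delay}, that an extremizer of \eqref{eq:prb:ft12} is a generalized extremal with time delay is a welcome (if routine) filling-in of a step the paper leaves implicit.
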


\begin{corollary}[Cf. \cite{FredericoTorres2012}]
\label{corol Noether delay}
If functional \eqref{eq:prb:ft12} is invariant in the sense
of Definition~\ref{def invariance}, then the quantities
\begin{multline*}
\left[\partial_3L[x]_\tau(t)+\partial_5L[x]_\tau(t+\tau)\right]\xi(t)\\
+ \left[L[x]_\tau(t)-\dot{x}(t)\left(
\partial_3L[x]_\tau(t)+\partial_5L[x]_\tau(t+\tau)\right)\right]\sigma(t),
\end{multline*}
$a\leq t\leq b-\tau$, and
\begin{equation*}
\partial_3L[x]_\tau(t)\xi(t)+[L[x]_\tau(t)-\dot{x}(t)\partial_3L[x]_\tau(t)]\sigma(t),
\end{equation*}
$b-\tau \leq t \leq b$, are conserved along the solutions of the Euler--Lagrange
equations \eqref{corol E-L delay 1}--\eqref{corol E-L delay 2} that satisfy
$$
\partial_4L[x]_\tau(t+\tau)\cdot \dot{x}(t)
+\partial_5L[x]_\tau(t+\tau)\cdot\ddot{x}(t)=0,
$$
$t\in[a-\tau,b-\tau]$, and
$$
\partial_4L[x]_\tau(t+\tau)\xi(t)+\partial_5L[x]_\tau(t+\tau)
\left(\dot{\xi}(t)-\dot{x}(t)\dot{\sigma}(t)\right)=0,
$$
$t \in [a, b-\tau]$.
\end{corollary}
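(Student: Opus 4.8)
The plan is to start from the integral identity \eqref{invariance condt} of Lemma~\ref{invariance condt II}, which holds for every $t \in [a,b]$, and to manipulate it into a boundary term that the Euler--Lagrange and DuBois--Reymond equations force to be constant. First I would isolate the two delayed contributions, those carrying $\partial_4 L$ and $\partial_5 L$ evaluated at $s-\tau$, and apply the change of variable $s \mapsto s+\tau$ exactly as in the proof of Theorem~\ref{Thm E-L eq time delay}. Since $\xi$ and $\sigma$ may be taken to vanish outside $[a,b]$, this shifts these terms to $\lambda(s+\tau)\partial_4 L[x,z]_\tau(s+\tau)\xi(s)$ and $\lambda(s+\tau)\partial_5 L[x,z]_\tau(s+\tau)(\dot\xi(s)-\dot x(s)\dot\sigma(s))$, and splits the range of integration into $[a,t-\tau]$, where the delayed coefficients appear, and $[t-\tau,t]$, where only the non-delayed part remains.

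The key simplification comes next: on $[a,t-\tau]$ the hypothesis \eqref{Extra Hip Noeth} annihilates precisely the shifted combination, so the identity collapses to $\int_a^t \lambda(s)\left[\partial_1 L\,\sigma + \partial_2 L\,\xi + \partial_3 L(\dot\xi - \dot x\,\dot\sigma) + L\,\dot\sigma\right]ds = 0$ for every $t \in [a,b]$. By arbitrariness of the upper limit, the integrand vanishes pointwise, giving an identity of the form $\partial_1 L\,\sigma + \partial_2 L\,\xi + \partial_3 L(\dot\xi - \dot x\,\dot\sigma) + L\,\dot\sigma = 0$. I would then feed this back into the split form of the invariance identity; it removes the $[t-\tau,t]$ piece together with the undifferentiated contributions, leaving an integral over $[a,t-\tau]$ whose integrand consists of a $\xi$-term and a $\dot\sigma$-term built from the combined coefficients $\lambda(s)\partial_3 L + \lambda(s+\tau)\partial_5 L(s+\tau)$.

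Integration by parts is the mechanism that produces the conserved current. After integrating by parts on $[a,t-\tau]$, the coefficient of $\xi$ inside the integral is exactly the left-hand side of the Euler--Lagrange equation \eqref{E-L delay 1} and therefore vanishes, while the coefficient of $\sigma$ is the left-hand side of the DuBois--Reymond equation \eqref{DuBois-R (1)} and also vanishes; only the boundary term survives. Because this boundary term is zero for every $t \in [a+\tau,b]$, the bracketed quantity \eqref{Quant 1} is constant on $[a,b-\tau]$. The region $[b-\tau,b]$ is then handled by the same mechanism with no delayed terms present: restricting the pointwise identity to a subinterval $[t_1,t_2]\subseteq[b-\tau,b]$, integrating by parts, and eliminating the $\xi$- and $\sigma$-coefficients with the Euler--Lagrange equation \eqref{E-L delay 2} and the DuBois--Reymond equation \eqref{DuBois-R (2)} respectively, shows that \eqref{Quant 2} is constant there.

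The main obstacle I anticipate is the careful bookkeeping of the time-shifted terms: one must track how the change of variable $s\mapsto s+\tau$ redistributes the $\partial_4 L$ and $\partial_5 L$ contributions across the two subintervals, and verify that hypothesis \eqref{Extra Hip Noeth} cancels exactly the shifted combination needed to reach the pointwise identity, while hypothesis \eqref{extra-extra} is what makes the DuBois--Reymond equation \eqref{DuBois-R (1)} available in the first place. The conceptual content is light once these cancellations are arranged; the delicacy lies entirely in matching the delayed coefficients to the two optimality conditions so that both the Euler--Lagrange and DuBois--Reymond expressions can be recognized and discarded.
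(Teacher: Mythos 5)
Your argument is correct, but it is not the paper's proof of this corollary --- it is, step for step, the paper's proof of Theorem~\ref{Thm Noether with delay}, reproduced in full. The paper derives Corollary~\ref{corol Noether delay} in a single stroke: for the functional \eqref{eq:prb:ft12} the Lagrangian does not depend on $z$, so $\partial_6 L \equiv 0$ and hence $\lambda(t) \equiv 1$; with this, problem $(P)$ reduces to the classical variational problem with time delay, the hypotheses \eqref{extra-extra} and \eqref{Extra Hip Noeth} become the two conditions stated in the corollary, and the conserved quantities \eqref{Quant 1} and \eqref{Quant 2} of Theorem~\ref{Thm Noether with delay} collapse exactly to the two $\lambda$-free expressions claimed. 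Your route --- re-running the invariance identity \eqref{invariance condt} of Lemma~\ref{invariance condt II}, the change of variable with $\xi$, $\sigma$ null outside $[a,b]$, the cancellation by \eqref{Extra Hip Noeth}, the pointwise identity, and the integrations by parts matched against \eqref{E-L delay 1} and \eqref{DuBois-R (1)} on $[a,b-\tau]$ and against \eqref{E-L delay 2} and \eqref{DuBois-R (2)} on $[b-\tau,b]$ --- is sound and self-contained, but it proves the general theorem rather than the corollary: as written it delivers constancy of \eqref{Quant 1} and \eqref{Quant 2}, which still carry the weights $\lambda(t)$ and $\lambda(t+\tau)$, so to land on the corollary's quantities you must still invoke $\partial_6 L \equiv 0 \Rightarrow \lambda \equiv 1$, and that one observation is the entire content of the paper's own proof. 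What the paper's approach buys is economy and the structural point that the Frederico--Torres results are particular cases of the Herglotz-type theorems; what yours buys is independence from Theorem~\ref{Thm Noether with delay}, at the cost of repeating all of its work.
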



\section{Illustrative example}
\label{sec:ex}

We present an example that shows the usefulness of our results.
Consider the following Herglotz's variational problem with time delay $\tau=1$:
\begin{equation}
\label{example}
\begin{gathered}
z(2)\longrightarrow \textrm{extr}\\
\dot{z}(t)=L[x,z]_1(t):=\left(\dot{x}(t-1)\right)^2 + z(t), \quad t\in[0,2],\\
x(t)=-t, \quad t \in [-1,0],\\
x(2)=1, \quad z(0)=0.
\end{gathered}
\end{equation}
For this problem, Euler--Lagrange optimality conditions
\eqref{E-L delay 1}--\eqref{E-L delay 2} given by
Theorem~\ref{Thm E-L eq time delay} assert that
\begin{equation*}
\begin{cases}
\dot{x}(t)-\ddot{x}(t)=0, \quad t \in [0,1],\\
0=0, \quad t \in [1,2].
\end{cases}
\end{equation*}
Solving the equation of previous system
with the initial condition $x(0)=0$, we obtain
\begin{equation*}
x(t)=-k+ke^t, \quad t \in [0,1],
\end{equation*}
for some constant $k \in \mathbb{R}$. Since in $[0,1]$ $z$ is
defined by $\dot{z}(t)=1+z(t)$, with $z(0)=0$, we obtain
\begin{equation*}
z(t)=e^t-1, \quad t \in [0,1].
\end{equation*}
In order to illustrate our remaining results
(Theorems~\ref{Th. DuBois-R time delay} and \ref{Thm Noether with delay}),
we look for trajectories $x$ that satisfy hypothesis \eqref{Extra Hip}:
$2\dot{x}(t)\cdot \ddot{x}(t)=0$, $t \in [-1,1]$. This condition is trivially
satisfied in the interval $[-1,0]$, but leads to $k=0$ and, consequently,
$x(t)=0$ in $[0,1]$. Hence, we get a family $x_\phi$ of generalized extremals
with time delay given by
\begin{equation}
\label{example_x}
x_\phi(t)=
\begin{cases}
-t, \quad t \in[-1,0],\\
0, \quad t \in[0,1],\\
\phi(t), \quad t \in[1,2],\\
1, \quad t=2,\\
\end{cases}
\end{equation}
where the continuous function $\phi$ is chosen to guarantee that $x_\phi$ is a $C^2$ function.
With $x$ defined by \eqref{example_x} for some $\phi$, and $z$ defined in $[1,2]$
as $\dot{z}(t)=z(t)$ with $z(1)=e-1$, it follows that $z(t)=e^{t-1}(e-1)$
for $t \in [1,2]$ and, consequently,
\begin{equation}
\label{example_z}
z(t)=
\begin{cases}
e^t-1, \quad t \in [0,1],\\
e^{t-1}(e-1), \quad t \in [1,2],
\end{cases}
\end{equation}
for which $z(2)=e^2-e$. Next we show that DuBois--Reymond conditions
\eqref{DuBois-R (1)}--\eqref{DuBois-R (2)} given by
Theorem~\ref{Th. DuBois-R time delay} are valid for
$x$ and $z$  given by \eqref{example_x}--\eqref{example_z}.
In this case, \eqref{DuBois-R (1)} reduces to
\begin{equation*}
\frac{d}{dt}\left[\lambda(t)\left(\dot{x}^2(t-1)+z(t)\right)
-\dot{x}(t)\left(2\lambda(t+1)\dot{x}(t)\right)\right]=0,
\quad t \in [0,1],
\end{equation*}
which is equivalent to
\begin{equation*}
\frac{d}{dt}\left[\lambda(t)e^t\right]=0, \quad t \in [0,1].
\end{equation*}
Since $\lambda(t)=e^{-t}$, condition \eqref{DuBois-R (1)} holds for
$t \in [0,1]$. Similarly, it can be proved that condition
\eqref{DuBois-R (2)} holds for $t \in [1,2]$.
Finally, we show the relevance of our main result (Theorem~\ref{Thm Noether with delay}).
First we define a one-parameter group of transformations on $t$ and $x$ with generators
$\sigma(t) \equiv 1$ and $\xi(t) \equiv 0$, respectively. Since the Lagrangian defined in \eqref{example}
is autonomous, i.e., does not depend explicitly on $t$, then it is invariant in the sense
of Definition~\ref{def invariance with time delay}.
Observe that in this case hypothesis \eqref{Extra Hip Noeth} is trivially satisfied.
Theorem~\ref{Thm Noether with delay} asserts that \eqref{Quant 1} and \eqref{Quant 2}
are constant in $t$, in intervals $[0,1]$ and $[1,2]$, respectively,
along generalized extremals with time delay that satisfy hypotheses
\eqref{extra-extra} and \eqref{Extra Hip Noeth}.
Observe that \eqref{Quant 1} is equal to
\begin{equation*}
\begin{split}
\left[\lambda(t)L[x,z]_1(t)-2 \left(\dot{x}(t)\right)^2\lambda(t+1)\right]\sigma(t)
&=e^{-t}\left[\dot{x}^2(t-1)+z(t)\right]\\
&=e^{-t}\left[1+e^{t}-1\right], \quad t \in [0,1],
\end{split}
\end{equation*}
which is equal to one. Similarly, it can be easily proved that quantity
\eqref{Quant 2} is also constant in $t$ and  equal to $1-e^{-1}$.


\section*{Acknowledgements}

This work was supported by Portuguese funds through the
\emph{Center for Research and Development in Mathematics and Applications} (CIDMA)
and the \emph{Portuguese Foundation for Science and Technology}
(``FCT---Funda\c{c}\~ao para a Ci\^encia e a Tecnologia''),
within project UID/MAT/04106/2013.
Torres was also supported by EU funding under
the 7th Framework Programme FP7-PEOPLE-2010-ITN, grant agreement 264735-SADCO;
and by the FCT project OCHERA, PTDC/EEI-AUT/1450/2012, co-financed by
FEDER under POFC-QREN with COMPETE reference FCOMP-01-0124-FEDER-028894.
The authors are very grateful to two anonymous referees,
for several constructive remarks and questions.



\medskip

Received May 2014; revised October 2014; accepted January 2015.

\medskip


\end{document}